\newcommand{\bea}{\begin{eqnarray}}
\newcommand{\eea}{\end{eqnarray}}
\def\beaa{\begin{eqnarray*}}
\def\eeaa{\end{eqnarray*}}
\def\ba{\begin{array}}
\def\ea{\end{array}}
\def\be#1{\begin{equation} \label{#1}}
\def \eeq{\end{equation}}
\newcommand{\Le}{{\rm{{Le}}}}
\def\be{{\beta}}
\def\varep{\varepsilon}
\def\R{{\mathbb{R}}}
\def\C{{\mathbb{C}}}
\def\N{{\mathbb N}}
\def\Re{\mathop{\rm Re}}
\def\Im{\mathop{\rm Im}}
\newtheorem{theorem}{Theorem}[section]
\newtheorem{lemma}[theorem]{Lemma}
\newtheorem{proposition}[theorem]{Proposition}
\newtheorem{definition}[theorem]{Definition}
\newtheorem{remark}[theorem]{Remark}
\numberwithin{equation}{section}
\numberwithin{equation}{section}
\begin{document}

\title[Stability analysis and Hopf bifurcation]{Stability analysis and Hopf bifurcation at high Lewis number in a combustion model with free interface}
\author{Claude-Michel Brauner}
\address{School of Mathematical Sciences, University of Science and Technology of China, Hefei 230026 (China), and
Institut de Math\'ematiques de Bordeaux, Universit\'e de Bordeaux, 33405 Talence Cedex (France).}

\author{Luca Lorenzi}
\address{Dipartimento di Scienze Matematiche, Fisiche e Informatiche, Plesso di Matematica e Informati\-ca, Universit\`a di Parma, Parco Area delle Scienze 53/A, I-43124 Parma (Italy)}

\author{Mingmin Zhang}\address{School of Mathematical Sciences, University of Science and Technology of China, Hefei 230026 (China).}
\email{claude-michel.brauner@u-bordeaux.fr}
\email{luca.lorenzi@unipr.it}
\email{lang925@mail.ustc.edu.cn}

\keywords{Free interface problem; traveling wave solutions; fully nonlinear parabolic systems; stability; Hopf bifurcation; combustion}
\subjclass[2010]{Primary: 35R35; Secondary: 35K55, 35B35, 80A25}

\begin{abstract}
In this paper we analyze the stability of the traveling wave solution for an ignition-temperature, first-order reaction model of thermo-diffusive combustion, in the case of high Lewis numbers ($\Le >1$). The system of two parabolic PDEs is characterized by a free interface at which ignition temperature $\Theta_i$ is reached. We turn the model to a fully nonlinear problem in a fixed domain. When the Lewis number is large, we define a bifurcation parameter $m=\Theta_i/(1-\Theta_i)$ and a perturbation parameter $\varep= 1/\Le$. The main result is the existence of a critical value $m^c(\varep)$ close to $m^c=6$ at which Hopf bifurcation holds for $\varep$ small enough. Proofs combine spectral analysis and non-standard application of Hurwitz Theorem with asymptotics as $\varep \to 0$.
\end{abstract}
\maketitle
\section{introduction}\label{intro}

This paper is devoted to the stability analysis of a unique (up to translation) traveling wave solution  to a thermo-diffusive model of flame propagation with stepwise temperature kinetics and first-order reaction (see \cite{BGKS15}) at high Lewis numbers, namely $\Le>1$. The problem reads in one spatial dimension:
\begin{eqnarray}\label{problem-1}
\left\{
\begin{array}{l}
\displaystyle\frac{\partial\Theta}{\partial t}=\frac{\partial^2\Theta}{\partial x^2}+W(\Theta,\Phi), \\[2mm]
\displaystyle\frac{\partial\Phi}{\partial t}={\Le}^{-1}\frac{\partial^2\Phi}{\partial x^2}-W(\Theta,\Phi).
\end{array}
\right.
\end{eqnarray}
Here, $\Theta$ and $\Phi$ are appropriately normalized temperature and concentration of deficient reactant,
$x\in \R$ denotes the spatial coordinate, $t>0$ the time. The nonlinear term $W(\Theta,\Phi)$ is a scaled reaction rate given by (see \cite[Section 2, formula (3)]{BGKS15}):
\begin{eqnarray}\label{problem-2}
W(\Theta,\Phi)=
\left\{
\begin{array}{lllll}
A\Phi,  & \mbox{if} & \Theta\ge \Theta_i, \\[2mm]
0, & \mbox{if} &\Theta<\Theta_i.
\end{array}
\right.
\end{eqnarray}
In \eqref{problem-2}, $0<\Theta_i<1$ is the reduced ignition temperature, $A>0$ is a normalized factor depending on $\Theta_i$ and $\Le$, to be determined hereafter for the purpose of ensuring that the speed of traveling wave is set at unity. Moreover, the following boundary conditions hold at $\pm \infty$:
\begin{eqnarray}
\label{boundary condition-12}
\begin{matrix}
\Theta(t, -\infty)=1, & & \Theta(t,\infty)=0,\\
\Phi(t, -\infty)=0, & & \Phi(t,\infty)=1.
\end{matrix}
\end{eqnarray}

In this first-order stepwise kinetics model, $\Phi$ does not vanish except as $t$ tends to $-\infty$. Thus, problem \eqref{problem-1}-\eqref{boundary condition-12} belongs to the class of parabolic Partial Differential Equations with discontinuous nonlinearities. Models in combustion theory and other fields (see, e.g. \cite[Section 1]{AF82}) involving discontinuous reaction terms have been used by physicists and engineers for long because of their manageability; as a result, elliptic and parabolic PDEs with discontinuous nonlinearities, and related Free Boundary Problems, have received a close attention from the mathematical community (see \cite[Section 1]{ABLZ18} and references therein). We quote in particular the paper \cite{C80}, by K.-C. Chang, which contains a systematical study of elliptic PDEs with discontinuous nonlinearities (DNDE).
	
In this paper, we consider the case of a free \textsl{ignition interface} $g(t)$ defined by
\begin{equation}
\label{ignition}
\Theta(t,g(t))=\Theta_i,
\end{equation}
such that $\Theta(t,x)>\Theta_i$ for $x>g(t)$ and  $\Theta(t,x)<\Theta_i$ for $x<g(t)$. Formula (\ref{ignition}) means that the ignition temperature $\Theta_i$ is reached at the ignition interface which defines the flame front. We point out that,  in contrast to conventional Arrhenius kinetics where the reaction zone is infinitely thin, the reaction zone for stepwise temperature kinetics is of order unity (thick flame). It is also interesting to compare the first-order stepwise kinetics with the zero-order kinetics model (see \cite{ABLZ18,BGKS15,BGZ16}): in the zero-order kinetics, $\Phi(t,x)$ vanishes at a \textit{trailing interface} and does not appear explicitly in the nonlinear term (see \cite[Section 2,  formula (4)]{BGKS15}).

According to \eqref{ignition}, the system for $\pmb{X}= (\Theta,\Phi)$ reads as follows, for $ t>0$ and $x \in \R, x \neq g(t)$:
\begin{eqnarray}
\label{system-1}
&&\left\{\begin{aligned}
&\frac{\partial\Theta}{\partial t}=\frac{\partial^2\Theta}{\partial x^2}+A\Phi,&x<g(t),\\
&\frac{\partial\Phi}{\partial t}={\rm \Le}^{-1}\frac{\partial^2\Phi}{\partial x^2}-A\Phi,\quad &x<g(t),
\end{aligned}\right.\\[2mm]
\label{system-2}
&&\left\{\begin{aligned}
&\frac{\partial\Theta}{\partial t}=\frac{\partial^2\Theta}{\partial x^2},&x>g(t),\\
&\frac{\partial\Phi}{\partial t}={\rm \Le}^{-1}\frac{\partial^2\Phi}{\partial x^2},\quad &x>g(t).
\end{aligned}\right.
\end{eqnarray}
At the free interface $x=g(t)$, the following continuity conditions hold:
\begin{equation}\label{system 1-2}
[\Theta]=[\Phi]=0, \qquad\;\, \bigg [\frac{\partial\Theta}{\partial x}\bigg ]=\bigg [\frac{\partial\Phi}{\partial x}\bigg ]=0,
\end{equation}
where we denote by $[f]$ the jump of a function $f$ at a point $x_0$, i.e., the difference $f(x_0^+)-f(x_0^-)$.

The system above admits a unique (up to translation) traveling wave solution $\pmb U=(\Theta^0,\Phi^0)$ which propagates with constant positive velocity $V$. In the moving frame coordinate $z=x-Vt$, by choosing
\begin{equation}
\label{eqn:A}
A=\frac{\Theta_i}{1-\Theta_i}\bigg(1+\frac{\Theta_i}{\Le(1-\Theta_i)}\bigg),
\end{equation}
to have $V=1$ and, hence, $z=x-t$, the traveling wave solution is explicitly given by the following formulae:
\begin{eqnarray*}
&\Theta^0(z)&=
\left\{\begin{aligned}
&1-(1-\Theta_i)e^{\frac{\Theta_i}{1-\Theta_i}z},&  \ \  &z<0,&\\
&\Theta_ie^{-z},&  \ \  &z>0,&
\end{aligned}\right.\\[2mm]
&\Phi^0(z)&=
\left\{\begin{aligned}
&\frac{\Theta_i}{A(1-\Theta_i)}e^{\frac{\Theta_i}{1-\Theta_i}z},&  \ \  &z<0,&\\
&1+\left (\frac{\Theta_i}{A(1-\Theta_i)}-1\right )e^{-\Le z},&  \ \  &z>0.&
\end{aligned}\right.
\end{eqnarray*}

The goal of this paper is the analysis of the stability of the traveling wave solution $\pmb U$ in the case of high Lewis numbers ($\Le>1$). Here, stability refers to orbital stability with asymptotic phase, because of the translation invariance of the traveling wave. It is known (see \cite[Section 3.2]{BGKS15})
that large enough Lewis numbers give rise to \textit{pulsating instabilities}, i.e., oscillatory behavior of the flame. This is
very unlike \textit{cellular instabilities} for relatively small Lewis number ($\Le<1$), that is pattern formation; in the latter case, a paradigm for  the evolution of the disturbed flame front is the Kuramoto-Sivashinsky equation (see \cite{MS79, S80}, and also \cite{BHL13,BHL09, BHL11, BHLS10, BLSX10}).

The paper is organized as follows: In Section \ref{linear operator}, we first transform the free interface problem to a system of parabolic equations on a fixed domain. Then, in the spirit of  \cite{BHL00,Lorenzi02,Lorenzi02-b}, the perturbation $\pmb u$ of the traveling wave $\pmb U$ is split as $\displaystyle\pmb u= s\frac{d\pmb U}{d\xi} +\pmb v$ (``ansatz 1''), in which $s$ is the perturbation of the front $g$. The largest part of the section is devoted to a thorough study of the linearization at $0$ of the elliptic part of the parabolic system in a weighted space $\bm{\mathcal W}$ where its realization $L$ is sectorial (see Subsection \ref{linearized subsect} for further details about the use of a weighted space). Furthermore, we determine the spectrum of $L$ which contains $(-\infty,-\frac{1}{4}]$, a parabola and its interior, the roots of the so-called dispersion relation, and the eigenvalue $0$.
Thereafter, an important point is getting rid of the eigenvalue 0 which, as it has been already stressed, is generated by translation invariance. In Section \ref{sect-3}, we use a spectral projection $P$ as well as ``ansatz 2'' and then derive the fully nonlinear problem (see, e.g. \cite{Lunardi96}) for $\pmb w$:
\begin{equation*}
\frac{\partial\pmb w}{\partial\tau} = (I-P)L\pmb w + {F}(\pmb w).
\end{equation*}

Next, in Sections \ref{stability} and \ref{sect-5} we use the bifurcation parameter $m$ defined by
\begin{eqnarray*}
m:=\frac{\Theta_i}{1-\Theta_i}
\end{eqnarray*}
to investigate the stability of the traveling wave. Simultaneously, as one already noted that pulsating instability is likely to occur at large Lewis number, it is natural to introduce a small perturbation parameter $\varepsilon >0$ (dimensionless diffusion coefficient) defined by $\varepsilon:=\Le^{-1}$, so that \eqref{eqn:A} reads $A =m+\varep m^2$.
The simplest situation arises in the asymptotic case of gasless combustion when $\Le =\infty$, as in \cite{GJ09}. As it is easily seen, as $\varepsilon \to 0$, problem \eqref{system-1}-\eqref{system-2} converges formally to:
\begin{eqnarray}
\label{system-1-limit}
&&\left\{\begin{aligned}
&\frac{\partial\Theta}{\partial t}=\frac{\partial^2\Theta}{\partial x^2}+A\Phi,\quad &x<g(t),\\
&\frac{\partial\Phi}{\partial t}=-A\Phi,&x<g(t),
\end{aligned}\right.\\[2mm]
\label{system-2-limit}
&&\left\{\begin{aligned}
&\frac{\partial\Theta}{\partial t}=\frac{\partial^2\Theta}{\partial x^2},\quad &x>g(t),\\
&\Phi\equiv 1,&x>g(t),
\end{aligned}\right.
\end{eqnarray}
with conditions $[\Theta]= [\Phi] = 0$, $\displaystyle\left [\frac{\partial\Theta}{\partial x}\right ]=0$ at the free interface $x=g(t)$. However,
the limit free interface system \eqref{system-1-limit}-\eqref{system-2-limit} is only partly parabolic.

At the outset, we fix $m$ in Section \ref{stability} and let $\varep$ tend to $0$, which allows to apply the classical Hurwitz Theorem in complex analysis to the \textit{dispersion relation} $D_{\varep}(\lambda,m)$. Our first main result, Theorem \ref{stability theorem TW}, states that, for $2<m<m^c=6$ and $0<\varep<\varep_0(m)$, the traveling wave $\pmb U$ is orbitally stable with asymptotic phase and, for $m>m^c=6$, it is unstable. To give a broad picture, we take advantage of the regular convergence of the point spectrum as $\varep \to 0$.

Section \ref{sect-5} is devoted to the proof of Hopf bifurcation in a neighborhood of the critical value $m^c=6$. The difficulty is twofold: first, the framework is that of a fully nonlinear problem; second, $m$ is not fixed in the sequence of parameterized analytic functions $D_{\varep}(\lambda,m)$ which prevents us from using Hurwitz Theorem directly. The trick is to find a proper approach to combining $m$ with  $\varep$: to this end we construct a sequence of critical values $m^c(\varep)$ such that $m^c(0)=m^c$ and apply Hurwitz Theorem to $D_{\varep}(\lambda,m^c(\varep))$. Proposition \ref{give critical value} and Theorem \ref{Hopf bifurcation theorem} are crucial to prove Hopf bifurcation at $m^c(\varep)$ for $\varep$ small enough. Finally, in three appendices, we collect some formulae and results that we use to prove our main results.

\section{The linearized operator}\label{linear operator}
In this section, we first derive the governing equations for the perturbations of the traveling wave solution. As usual, it is convenient to transform the free interface problem to a system on a fixed domain. More specifically, we use the general method of \cite{BHL00} that converts free interface problems to fully nonlinear problems with
transmission conditions at a fixed interface (see \cite{ABLZ18}). Then, we are going to focus on the linearized system.

\subsection{The system with fixed interface}\label{fixed}
To begin with, we rewrite problem \eqref{system-1}-\eqref{system 1-2} in a new system of coordinates that fixes the position of the ignition interface at the origin:
\begin{eqnarray*}
\tau=t,\ \
\xi=x-g(\tau).
\end{eqnarray*}
Hereafter, we are going to use, whenever it is convenient, the superdot to denote differentiation with respect to time and the prime to denote partial differentiation with respect to the space variable.

Then, the system for ${\pmb X}=(\Theta,\Phi)$ and $g$ reads:
\begin{eqnarray}
\label{perturbation T}
&&\left\{\begin{aligned}
\frac{\partial\Theta}{\partial\tau}-\dot{g}\frac{\partial\Theta}{\partial\xi}=&\frac{\partial^2\Theta}{\partial\xi^2}+A\Phi, \ \ &\xi<0,\\
\frac{\partial\Phi}{\partial\tau}-\dot{g}\frac{\partial\Phi}{\partial\xi}=&\Le^{-1}\frac{\partial^2\Phi}{\partial\xi^2}-A\Phi, \ \ &\xi<0, \end{aligned}\right.\\[1mm]
\label{perturbation W}
&&\left\{\begin{aligned}
\frac{\partial\Theta}{\partial\tau}-\dot{g}\frac{\partial\Theta}{\partial\xi}=&\frac{\partial^2\Theta}{\partial\xi^2}, \ \ &\xi>0,\\
\frac{\partial\Phi}{\partial\tau}-\dot{g}\frac{\partial\Phi}{\partial\xi}=&\Le^{-1}\frac{\partial^2\Phi}{\partial\xi^2}, \ \ &\xi>0. \end{aligned}\right.
\end{eqnarray}
Moreover, $\Theta$, $\Phi$ and their first-order space derivatives are continuous at the fixed interface $\xi=0$, thus
\begin{equation}
\Theta(\cdot,0)=\Theta_i, \qquad\;\,  [\Theta]=[\Phi]=0, \qquad\;\, \bigg [\frac{\partial\Theta}{\partial\xi}\bigg ]=\bigg [\frac{\partial\Phi}{\partial\xi}\bigg ]=0.
\label{interface-Theta-Phi}
\end{equation}
In addition, at $\xi=\pm \infty$, $\Theta$ and $\Phi$ satisfy \eqref{boundary condition-12}.

Next, we introduce the small perturbations $\pmb u=(u_1,u_2)$ and $s$, respectively of the traveling wave $\pmb U$ and of the front $g$, more precisely,
\begin{align*}
&u_1(\tau,\xi)=\Theta(\tau, \xi)-\Theta^0(\xi),\\
&u_2(\tau,\xi)=\Phi(\tau, \xi)-\Phi^0(\xi),\\
&s(\tau)=g(\tau)-\tau.
\end{align*}
It then follows that the perturbations $\pmb u$ and $s$ verify the system
\begin{eqnarray}
\label{u_1}
&&\left\{\begin{aligned}
\frac{\partial u_1}{\partial \tau}&=\frac{\partial^2 u_1}{\partial \xi^2}+\frac{\partial u_1}{\partial \xi}+Au_2+\dot{s}\frac{d\Theta^0}{d\xi}+\dot{s}\frac{\partial u_1}{\partial \xi}, \ &\xi<0,&\\
\frac{\partial u_2}{\partial \tau}&=\Le^{-1}\frac{\partial^2 u_2}{\partial \xi^2}+\frac{\partial u_2}{\partial \xi}-Au_2+\dot{s}\frac{d\Phi^0}{d\xi}+\dot{s}\frac{\partial u_2}{\partial \xi}, \ &\xi<0,&
\end{aligned}\right.\\[1mm]
\label{u_2}
&&\left\{\begin{aligned}
\frac{\partial u_1}{\partial \tau}&=\frac{\partial^2 u_1}{\partial \xi^2}+\frac{\partial u_1}{\partial \xi}+\dot{s}\frac{d\Theta^0}{d\xi}+\dot{s}\frac{\partial u_1}{\partial \xi}, \ &\xi>0,&\\
\frac{\partial u_2}{\partial \tau}&=\Le^{-1}\frac{\partial^2 u_2}{\partial \xi^2}+\frac{\partial u_2}{\partial \xi}+\dot{s}\frac{d\Phi^0}{d\xi}+\dot{s}\frac{\partial u_2}{\partial \xi}, \ &\xi>0,&
\end{aligned}\right.
\end{eqnarray}
and the corresponding interface conditions obtained from \eqref{interface-Theta-Phi} are:
\begin{equation}\label{interface-u}
u_1(\tau,0)=0,\qquad\;\, [u_1]=[u_2]=\bigg [\frac{\partial u_1}{\partial \xi}\bigg ]=\bigg [\frac{\partial u_2}{\partial \xi}\bigg ]=0.
\end{equation}

\subsection{Ansatz 1}
\label{subsect-2.2}
In the spirit of \cite{BHL00,Lorenzi02}, we introduce the following splitting or ansatz:
\begin{equation}\label{ansatz1}
\begin{aligned}
u_1(\tau,\xi)=&s(\tau)\frac{d\Theta^0}{d\xi}(\xi)+v_1(\tau,\xi),\\
u_2(\tau,\xi)=&s(\tau)\frac{d\Phi^0}{d\xi}(\xi)+v_2(\tau,\xi),
\end{aligned}
\end{equation}
in which $v_1$, $v_2$ are new unknown functions. In a more abstract setting, the ansatz reads
\begin{equation*}
\pmb u(\tau,\xi)= s(\tau)\frac{d\pmb U}{d\xi} +\pmb v(\tau,\xi), \qquad\;\, \pmb v=(v_1,v_2).
\end{equation*}
Substituting \eqref{ansatz1} into \eqref{u_1}-\eqref{u_2}, we get the system for $\pmb u$ and $s$:
\begin{eqnarray}
\label{v1}
&&\left\{\begin{aligned}
\frac{\partial v_1}{\partial \tau}&=\frac{\partial^2 v_1}{\partial \xi^2}+\frac{\partial v_1}{\partial \xi}+Av_2+\dot{s}\left (s\frac{d^2\Theta^0}{d\xi^2}+\frac{\partial v_1}{\partial \xi}\right ), \ \ &\xi<0&,\\
\frac{\partial v_2}{\partial \tau}&=\Le^{-1}\frac{\partial^2 v_2}{\partial \xi^2}+\frac{\partial v_2}{\partial \xi}-Av_2+\dot{s}\left (s\frac{d^2\Phi^0}{d\xi^2}+\frac{\partial v_2}{\partial \xi}\right ), \ \ &\xi<0&,
\end{aligned}\right.\\[1mm]
\label{v2}
&&\left\{\begin{aligned}
\frac{\partial v_1}{\partial \tau}&=\frac{\partial^2 v_1}{\partial \xi^2}+\frac{\partial v_1}{\partial \xi}+\dot{s}\left (s\frac{d^2\Theta^0}{d\xi^2}+\frac{\partial v_1}{\partial \xi}\right ), \ \ &\xi>0&,\\
\frac{\partial v_2}{\partial \tau}&=\Le^{-1}\frac{\partial^2 v_2}{\partial \xi^2}+\frac{\partial v_2}{\partial \xi}+\dot{s}\left (s\frac{d^2\Phi^0}{d\xi^2}+\frac{\partial v_2}{\partial \xi}\right ), \ \ &\xi>0&.
\end{aligned}\right.
\end{eqnarray}
At $\xi=0$, it is easy to see that the new interface conditions are:
\begin{eqnarray*}
[v_1]=[v_2]=0,\qquad\,\,\bigg [\frac{\partial v_1}{\partial \xi}\bigg ]=-s\bigg [\frac{d^2\Theta^0}{d\xi^2}\bigg ],\qquad\,\, \bigg [\frac{\partial v_2}{\partial \xi}\bigg ]=-s\bigg [\frac{d^2\Phi^0}{d\xi^2}\bigg ],\qquad\,\,v_1(\tau,0)=-s\frac{\partial\Theta^0}{\partial\xi}(0).
\end{eqnarray*}
Taking advantage of the conditions
\begin{eqnarray*}
\frac{d\Theta^0}{d\xi}(0)=-\Theta_i,\quad\bigg [\frac{d^2\Theta^0}{d\xi^2}\bigg ]=\frac{\Theta_i}{1-\Theta_i},\quad \bigg [\frac{d^2\Phi^0}{d\xi^2}\bigg ]=-\frac{\Le\Theta_i}{1-\Theta_i},
\end{eqnarray*}
where we used (\ref{eqn:A}) to derive the last condition, it follows that
\begin{equation}\label{transm}
s(\tau)=\frac{v_1(\tau,0)}{\Theta_i},\qquad\;\, \bigg [\frac{\partial v_1}{\partial \xi}\bigg ]=-\frac{v_1(\tau,0)}{1-\Theta_i},\qquad\;\, \bigg [\frac{\partial v_2}{\partial \xi}\bigg ]=\frac{v_1(\tau,0)\Le}{1-\Theta_i}.
\end{equation}

Summarizing,  the free interface problem \eqref{system-1}-\eqref{system-2} has been converted to ($\ref{u_1}$)-($\ref{u_2}$), which constitutes a nonlinear system for $v_1$, $v_2$ and $s$, with transmission conditions \eqref{transm} at $\xi=0$. The next subsections are devoted to the study of the linearized problem (at zero) in an abstract setting, with simplified notation $\pmb u=(u,v)$ for convenience.

\subsection{The linearized problem}\label{linearized subsect}
Now, we consider the linearization at $0$ of the system \eqref{v1}-\eqref{transm}, which reads as follows:
\begin{eqnarray}
\label{linear pb-u}
&&\left\{\begin{aligned}
\frac{\partial u}{\partial \tau}&=\frac{\partial^2u}{\partial\xi^2}+\frac{\partial u}{\partial\xi}+Av, &\xi<0,\\
\frac{\partial v}{\partial\tau}&=\Le^{-1}\frac{\partial^2v}{\partial \xi^2}+\frac{\partial v}{\partial\xi}-Av,\quad &\xi<0,\\
\end{aligned}\right.\\[1mm]
\label{linear pb-v}
&&\left\{\begin{aligned}
\frac{\partial u}{\partial \tau}&=\frac{\partial^2u}{\partial\xi^2}+\frac{\partial u}{\partial\xi}, & \xi>0,\\
\frac{\partial v}{\partial\tau}&=\Le^{-1}\frac{\partial^2v}{\partial\xi^2}+\frac{\partial v}{\partial\xi}, &\quad  \xi>0,
\end{aligned}\right.
\end{eqnarray}
with the interface conditions
\begin{equation}
\label{linear pb-interface}
[u]=[v]=0,\qquad\;\,  \bigg [\frac{\partial u}{\partial\xi}\bigg ]=-\frac{u(\tau,0)}{1-\Theta_i},\qquad\;\,
\bigg [\frac{\partial v}{\partial\xi}\bigg ]=\frac{u(\tau,0)\Le}{1-\Theta_i}.
\end{equation}

Problem \eqref{linear pb-u}-\eqref{linear pb-v} can be written in the more compact form
$\displaystyle\frac{\partial\pmb u}{\partial\tau}={\mathcal L}\pmb u$, where $\pmb u=(u,v)$,
\begin{eqnarray*}
\mathcal{L}=\left(\begin{matrix}
\displaystyle\frac{\partial^2}{\partial\xi^2}+\frac{\partial}{\partial\xi}& &A\chi_{-}\\
0& &\displaystyle\Le^{-1}\frac{\partial^2}{\partial\xi^2}+\frac{\partial}{\partial \xi}-A\chi_{-}
\end{matrix}\right)
\end{eqnarray*}
and $\chi_-$ denotes the characteristic function of the set $(-\infty,0)$.

We now introduce the weighted space $\bm{\mathcal W}$ where we analyze the system \eqref{linear pb-u}-\eqref{linear pb-interface}.
As a matter of fact, the introduction of exponentially weighted spaces for proving stability of traveling waves has been a standard tool since the pioneering work of Sattinger (see \cite{S76}), its role being to shift the continuous spectrum to the left and, thus, creating a gap with the imaginary axis which simplifies the analysis.

\begin{definition}
The exponentially weighted Banach space $\bm{\mathcal W}$ is defined by
\begin{align*}
\bm{\mathcal W}=\Big\{&\pmb u:
e^{\frac{1}{2}\xi}u, e^{\frac{1}{2}\xi}v\in C_b((-\infty,0);\mathbb C),\
e^{\frac{1}{2}\xi}u, e^{\frac{\Le}{2}\xi}v\in C_b((0,\infty);\mathbb C), \lim_{\xi\to 0^{\pm}}u(\xi),\ \lim_{\xi\to 0^{\pm}}v(\xi)\in\mathbb R\Big\},
\end{align*}
equipped with the norm:
\begin{align*}
\|\pmb u\|_{\bm{\mathcal W}}=&\sup_{\xi <0}|e^{\frac{1}{2}\xi}u(\xi)|+\sup_{\xi>0}|e^{\frac{1}{2}\xi}u(\xi)|+\sup_{\xi <0}|e^{\frac{1}{2}\xi}v(\xi)|+\sup_{\xi>0}|e^{\frac{\Le}{2}\xi}v(\xi)|.
\end{align*}	
\end{definition}
In the above definition, $C_b(I;\mathbb C)$ denotes the space of bounded and continuous functions from $I$ to $\mathbb{C}$, $I$ being either the interval $(-\infty,0)$ or $(0,\infty)$. We finally introduce the realization $L$ of the operator ${\mathcal L}$ in $\bm{\mathcal W}$ defined by
\begin{align*}
&D(L)=\bigg\{\pmb u\in \bm{\mathcal W}: \frac{\partial \pmb u}{\partial\xi},\frac{\partial^2\pmb u}{\partial\xi^2}\in \bm{\mathcal W},\ [u]=[v]=0,\ \bigg [\frac{\partial u}{\partial\xi}\bigg ]=-\frac{u(0)}{1-\Theta_i},\ \bigg [\frac{\partial v}{\partial\xi}\bigg ]=\frac{\Le\ u(0)}{1-\Theta_i}\bigg\},\\[1mm]
&L\pmb u=\mathcal{L}\pmb u,\qquad\;\, \pmb u\in \bm{\mathcal W}.
\end{align*}

\begin{remark}
\label{rem-simple}
{\rm We observe that, for any Lewis number, the pair
$\displaystyle\frac{d\pmb U}{d\xi}=\left (\frac{d\Theta^0}{d\xi},\frac{d\Phi^0}{d\xi}\right )$ verifies System \eqref{linear pb-u}, \eqref{linear pb-v}, and it belongs to the space $\bm{\mathcal W}$. In other words, $\displaystyle\frac{d\pmb U}{d\xi}$ is an eigenfunction of the operator $L$ associated with the eigenvalue 0.}
\end{remark}
The above remark gives a first justification for the choice of the exponential weights in the definition of $\bm{\mathcal W}$.
We also stress that, following the same strategy as in the proof of the forthcoming Theorem \ref{thm-2.3} it can be easily checked that the spectrum of the realization of the operator ${\mathcal L}$ in the nonweighted space of pairs $(u,v)$ such that $u$, $v$ are bounded and continuous in $(-\infty,0)\cup (0,\infty)$, contains a parabola which is tangent at $0$ to the imaginary axis.

\subsection{Analysis of the operator $L$}
\label{Resolvent operator}
Next theorem is devoted to a deep study of the operator $L$. For simplicity of notation, for $j=1,2$ we set
\begin{align}
H_{1,\lambda}=\sqrt{1+4\lambda},\qquad\;\,H_{2,\lambda}=\sqrt{\Le^2+4\Le(A+\lambda)},\qquad\;\,H_{3,\lambda}=\sqrt{\Le^2+4\Le\lambda}
\label{formula-1}
\end{align}
and
\begin{align}
&k_{j,\lambda}=\frac{-1+(-1)^{j+1}H_{1,\lambda}}{2},\qquad k_{2+j,\lambda}=\frac{-\Le+(-1)^{j+1}H_{2,\lambda}}{2},\qquad k_{4+j,\lambda}=\frac{-\Le+(-1)^{j+1}H_{3,\lambda}}{2}.
\label{formula-3}
\end{align}

\begin{theorem}
\label{thm-2.3}
The operator $L$ is sectorial and therefore generates an analytic semigroup. Moreover, its spectrum has components:
\begin{enumerate}[\rm (1)]
\item
$(-\infty,-1/4]\cup \mathcal{P}$, where $\mathcal{P}=\{\lambda\in\mathbb{C}:a\Re\lambda+b(\Im\lambda)^2+c\le 0\}$ with
\begin{align*}
a=\bigg (1-\frac{1}{\Le}\bigg )^2,\qquad\;\,
b=\frac{1}{\Le},\qquad\;\,
c=\frac{2A+1}{2}+\frac{8A-5}{4\Le}+\frac{1+A}{\Le^2}-\frac{1}{4\Le^3};
\end{align*}
\item
the simple isolated eigenvalue $0$, the kernel of $L$ being spanned by $\displaystyle\frac{d\pmb U}{d\xi}$;
\item
additional eigenvalues given by the solution of the dispersion relation
\begin{equation}
\label{d,r,Le}
D(\lambda; \Theta_i, \Le):=(k_{6,\lambda}-k_{3,\lambda})(k_{3,\lambda}-k_{2,\lambda})\big [1-(1-\Theta_i)\sqrt{1+4\lambda}\big ]+A\Le,
\end{equation}
where $A$ is given by \eqref{eqn:A}.
\end{enumerate}
\end{theorem}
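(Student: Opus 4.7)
The plan is to establish sectoriality first, and then to pin down the spectrum by an explicit resolvent analysis on each half-line. For sectoriality, I would conjugate $L$ by the exponential weights $e^{\xi/2}$ (on both half-lines for $u$ and on $(-\infty,0)$ for $v$) and $e^{\Le\xi/2}$ (on $(0,\infty)$ for $v$). This turns $L$ into a second-order elliptic system with constant leading coefficients, bounded lower-order terms, and the transmission conditions \eqref{linear pb-interface} at $\xi=0$, acting in the unweighted space of pairs of bounded continuous functions on $\mathbb R\setminus\{0\}$. Sectoriality of such operators is classical: one writes the resolvent on each half-line using the constant-coefficient Green's kernels, determines the four free constants from the transmission conditions, and patches the two pieces to obtain the standard estimate $\|(\lambda-L)^{-1}\|\le M/|\lambda|$ on a suitable sector.

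To locate the spectrum, I would solve $(\lambda I - L)\pmb u=\pmb f$ explicitly. On each half-line the $v$-component of the homogeneous system decouples, with characteristic exponents $k_{3,\lambda},k_{4,\lambda}$ on $(-\infty,0)$ and $k_{5,\lambda},k_{6,\lambda}$ on $(0,\infty)$; then $u$ satisfies $u''+u'-\lambda u = -A\chi_- v$, whose homogeneous exponents are $k_{1,\lambda},k_{2,\lambda}$. Membership of a mode $e^{k\xi}$ in $\bm{\mathcal W}$ amounts to a sign condition on $\Re(k)$ shifted by the corresponding weight, namely $\Re(k_{2,\lambda})\le -1/2$ and $\Re(k_{6,\lambda})\le -\Le/2$ at $+\infty$, and $\Re(k_{1,\lambda})\ge -1/2$, $\Re(k_{3,\lambda})\ge -1/2$ at $-\infty$. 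The conditions involving $k_{2,\lambda},k_{6,\lambda}$ and one of $k_{1,\lambda},k_{3,\lambda}$ always hold; the remaining ones fail precisely on $(-\infty,-1/4]$ (where $H_{1,\lambda}$ becomes purely imaginary, so decaying modes are lost for the $u$-equation at both ends) and on the region $\{\Re(H_{2,\lambda})<\Le-1\}$ (where the only admissible mode of the $v$-equation on $(-\infty,0)$ is lost). Writing $H_{2,\lambda}=p+iq$ with $p=\Le-1$ and using $2pq=4\Le\Im\lambda$ together with $p^2-q^2=\Le^2+4\Le(A+\Re\lambda)$, one eliminates $q$ to recast this region as $\{a\Re\lambda+b(\Im\lambda)^2+c\le 0\}$ with the stated coefficients; together with $(-\infty,-1/4]$ this delivers component (1).

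Outside the continuous spectrum, all admissible exponential modes are available and the resolvent equation reduces to a $4\times 4$ linear system for the four coefficients $(\alpha,\beta,\gamma,\delta)$ — two on each half-line — determined by the transmission conditions \eqref{linear pb-interface}. Computing its determinant and using the factorization $k_{3,\lambda}^2+k_{3,\lambda}-\lambda=(k_{3,\lambda}-k_{1,\lambda})(k_{3,\lambda}-k_{2,\lambda})$, which comes from $k^2+k-\lambda=(k-k_{1,\lambda})(k-k_{2,\lambda})$, reduces it exactly to the dispersion function $D(\lambda;\Theta_i,\Le)$ in \eqref{d,r,Le}. Hence the point spectrum outside the continuous set coincides with the zero set of $D$, and isolation follows from the analyticity of $D$ in $\lambda$. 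Remark \ref{rem-simple} already gives the eigenfunction $d\pmb U/d\xi$ at $\lambda=0$ (so $D(0)=0$); simplicity is obtained by checking both that the kernel of the $4\times 4$ interface matrix at $\lambda=0$ is one-dimensional and that the $\lambda$-derivative of $D$ at $0$ is non-zero.

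The main technical obstacle will be the careful tracking of weighted-boundedness for the coupled modes, especially handling the degeneracies $k_{1,\lambda}=k_{2,\lambda}$ at $\lambda=-1/4$ and the possible coincidence of $k_{3,\lambda}$ with $k_{1,\lambda}$ or $k_{2,\lambda}$, where the particular-solution formula for $u$ driven by $Av$ breaks down and must be replaced by a secular solution. A secondary delicate point is the translation of the implicit inequality $\Re(H_{2,\lambda})<\Le-1$ into the explicit polynomial inequality without introducing spurious branches from the squaring step, together with the combined geometric and algebraic verification of the simplicity of the eigenvalue $0$.
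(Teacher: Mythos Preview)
Your outline is essentially the paper's own proof: explicit resolvent on each half-line, reduction of the interface conditions to a $4\times 4$ linear system whose determinant is (a nonvanishing multiple of) $D(\lambda;\Theta_i,\Le)$, and simplicity of $\lambda=0$ via the rank of that matrix together with $D$ having a simple zero there. The paper establishes sectoriality by direct resolvent bounds (showing $|H_{j,\lambda}|\gtrsim\sqrt{|\lambda|}$, $|k_{i,\lambda}-k_{j,\lambda}|\asymp\sqrt{|\lambda|}$ and $|D(\lambda)|\gtrsim|\lambda|^{3/2}$ for $\Re\lambda$ large) rather than by conjugation, but your conjugation route is a standard alternative.

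One point is inverted, however. On $(-\infty,-1/4]$ you say ``decaying modes are lost for the $u$-equation at both ends''; in fact $\Re(k_{1,\lambda})=\Re(k_{2,\lambda})=-\tfrac12$ there, so \emph{both} exponentials $e^{k_{1,\lambda}\xi}$ and $e^{k_{2,\lambda}\xi}$ are bounded in the weighted norm on \emph{each} half-line. Nothing is lost; rather one \emph{gains} free parameters, the homogeneous interface system becomes underdetermined, and the paper exhibits explicit nontrivial kernel elements---so $(-\infty,-1/4]$ is \emph{point} spectrum by non-uniqueness, not continuous spectrum by non-existence. Your description of $\mathcal{P}$ is the correct one: there the $k_{3,\lambda}$-mode ceases to be admissible on $(-\infty,0)$, and the paper shows by an explicit choice of $\pmb f$ that the resulting overdetermined interface system is inconsistent.
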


\begin{proof}
Since the proof is rather lengthy, we split it into four steps.
In the first two steps, we prove properties (1) and (3). Step 3 is devoted to the proof of property (2).
Finally, in Step 4, we prove that the operator $L$ is sectorial in $\bm{\mathcal W}$.

For notational convenience, throughout the proof, we set
\begin{align*}
&{\mathscr I}_1:=\int_{0}^{\infty}f_1(s)e^{-k_1s}ds,&
&{\mathscr I}_2:=\int_{-\infty}^{0}f_1(s)e^{-k_2s}ds,&
&{\mathscr I}_3:=\int_{-\infty}^{0}f_2(s)e^{-k_2s}ds,\\
&{\mathscr I}_4:=\int_{-\infty}^{0}f_2(s)e^{-k_4s}ds, &
&{\mathscr I}_5:=\int_{0}^{\infty}f_2(s)e^{-k_5s}ds,&
\end{align*}
for any fixed $\pmb f=(f_1,f_2)\in\bm{\mathcal W}$, where, here and Step 1 to 3, we simply write $k_j$ instead of $k_{j,\lambda}$ to enlighten the notation.

\vskip 1mm
{\em Step 1}. To begin with, we prove that the interval $(-\infty,-1/4]$ belongs to the point spectrum of $L$. We first assume that $\lambda\le-\Le/4$ (recall that $\Le>1$). In such a case, $\Re(k_1)=\Re(k_2)=-1/2$, $\Re(k_5)=\Re(k_6)=-\Le/2$ and the function $\pmb u$ defined by
\begin{equation}
u(\xi)=
\left\{
\begin{array}{ll}
c_1 e^{k_1\xi}+c_2 e^{k_2 \xi}, & \xi<0,\\
c_5 e^{k_1\xi}+c_6 e^{k_2 \xi}, & \xi\ge 0,
\end{array}
\right.
\qquad\;\,
v(\xi)=
\left\{
\begin{array}{ll}
0, &\xi<0,\\
c_7 e^{k_5\xi}+c_8 e^{k_6\xi}, &\xi\ge 0,
\end{array}
\right.
\label{eigenvalue-1/4}
\end{equation}
belongs to $\bm{\mathcal W}$ and solves the equation $\lambda \pmb u-{\mathcal L}\pmb u={\bf 0}$ for any choice of the complex parameters $c_1$, $c_2$, $c_5$, $c_6$, $c_7$ and $c_8$.
Since there are only four boundary conditions to impose to guarantee that $\pmb u\in D(L)$,
the resolvent equation $\lambda \pmb u-{\mathcal L}\pmb u={\bf 0}$ is not uniquely solvable in $\bm{\mathcal W}$. Thus, $\lambda$ belongs to the point spectrum of $L$.

Next, we consider the case when $\lambda\in(-\Le/4, -1/4]$. In this situation, $\Re(k_1)=\Re(k_2)=-1/2$, however, $\Re(k_5)+\Le/2>0$, $\Re(k_6)+\Le/2<0$.
Thanks to the fact that $e^{\frac{\Le}{2}\xi}v(\xi)$ should be bounded in $(0,\infty)$, the constant $c_7$ in \eqref{eigenvalue-1/4} is zero, whereas the constants $c_1$, $c_2$, $c_5$, $c_6$  $c_8$ are arbitrary. As above, the resolvent equation $\lambda\pmb u-L\pmb u={\bf 0}$ cannot be solved uniquely. Consequently, we conclude that $(-\infty,-1/4]$ belongs to the point spectrum of the operator $L$.

From now on, we consider the case when $\lambda\notin (-\infty,-1/4]$. Then,
$\Re(k_1)+1/2>0$, $\Re(k_2)+1/2<0$, $\Re(k_5)+\Le/2>0$ and $\Re(k_6)+\Le/2<0$.
Similarly to the previous procedure, using the formulae \eqref{rs-u-positive}, \eqref{rs-v-positive} and \eqref{resolvent-u} as well as the fact that the functions $\xi\mapsto e^{\frac{1}{2}\xi}u(\xi)$ and $\xi\mapsto e^{\frac{\Le}{2}\xi}v(\xi)$ should be bounded in $\mathbb{R}$ and in $(0,\infty)$ respectively, the constants $c_2$, $c_5$, $c_7$ can be determined explicitly and they are given by
 \begin{align*}
&c_2=\frac{1}{H_{1,\lambda}}\int_{-\infty}^{0}(Av(s)+f_1(s))e^{-k_2s}ds,\qquad\;\, c_5=\frac{1}{H_{1,\lambda}}{\mathscr I}_1,\qquad\;\,c_7=\frac{\Le}{H_{3,\lambda}}{\mathscr I}_5.
\end{align*}

We now consider formula (\ref{resolvent-v}). Since $\Le>1$, it follows that  $\Re(k_4)+1/2<0$. Moreover, we observe that
the inequality $\Re(k_3)+{1}/{2}\le 0$ is satisfied if and only if $\lambda\in {\mathcal P}$. Indeed, fix any $\lambda\in \stackrel{\circ}{{\mathcal P}}$, the interior of ${\mathcal P}$, so that $\Re(k_3)+1/2<0$, and take
\begin{equation*}
f_1(\xi)=
\left\{
\begin{array}{ll}
e^{-\frac{1}{2}\xi}, &\xi<0,\\
0, &\xi\ge 0,
\end{array}
\right.
\qquad\;\,
f_2\equiv 0 \ \text{in} \ \mathbb{R}.
\end{equation*} 	
In such a case, the more general solution, $\pmb u\in\bm{\mathcal W}$, to the equation
$\lambda \pmb u-{\mathcal L}\pmb u=\pmb f$ is given
by $u(\xi)=c_6e^{k_2\xi}$ and $v(\xi)=c_8e^{k_6\xi}$ for $\xi\ge 0$, whereas $v\equiv 0$ in $(-\infty,0)$ and
$u(\xi)=c_1e^{k_1\xi}+2H_{1,\lambda}^{-2}(2e^{-\frac{1}{2}\xi}-e^{k_1\xi})$
for $\xi<0$. Note that $k_1\neq k_3$ for $\lambda\in\stackrel{\circ}{\mathcal P}$.
Imposing the boundary conditions, we deduce that $c_6=c_8=0$,
$c_1=-2H_{1,\lambda}^{-2}$ and $k_1c_1=2H_{1,\lambda}^{-2}k_2$,
which is clearly a contradiction. We conclude that the domain $\stackrel{\circ}{{\mathcal P}}$ and, consequently, its closure belong to the continuous spectrum of $L$.	
Summarizing, property (1) in the statement of the theorem is established.

\vskip 1mm
{\em Step 2}. Here, we consider the equation $\lambda \pmb u-{\mathcal L}\pmb u=\pmb f$ for $\pmb f\in\bm{\mathcal W}$ and values of $\lambda$ which are not in $(-\infty,-1/4]\cup{\mathcal P}$. For such $\lambda$'s and $j=1,2$ it holds that
\begin{equation}
\Re(k_{2j-1})+\frac{1}{2}>0,\qquad\;\,  \Re(k_{2j})+\frac{1}{2}<0,\qquad\;\, \Re(k_5)+\displaystyle\frac{\Le}{2}>0,\qquad\;\, \Re(k_6)+\displaystyle\frac{\Le}{2}<0.
\label{cond-1}
\end{equation}
We first assume that $k_1\neq k_3$. Imposing that the function $\pmb u$ defined by \eqref{rs-u-positive}-\eqref{resolvent-v} belongs to $\bm{\mathcal W}$, we can
uniquely determine the constants $c_2$, $c_4$, $c_5$ and $c_7$ and we get
\begin{align}
\label{re-u-negative}
u(\xi)
=&c_1e^{k_1\xi}+\frac{e^{k_1\xi}}{H_{1,\lambda}}\int_{\xi}^0 f_1(s)e^{-k_1s}ds
+\frac{e^{k_2\xi}}{H_{1,\lambda}}\int_{-\infty}^{\xi} f_1(s)e^{-k_2s}ds  \nonumber\\
&+\frac{A}{H_{1,\lambda}}\bigg\{\bigg (\frac{e^{k_3\xi}}{k_3-k_2}-\frac{e^{k_3\xi}-e^{k_1\xi}}{k_3-k_1}\bigg )c_3
+\frac{\Le}{H_{2,\lambda}}\bigg[\bigg (\frac{e^{k_1\xi}-e^{k_3\xi}}{k_3-k_1}
-\frac{e^{k_3\xi}}{k_3-k_2}\bigg )\int_{\xi}^0f_2(s)e^{-k_3s}ds\notag\\
&\phantom{-\frac{A}{H_{1,\lambda}}\bigg\{\;\,}+\frac{e^{k_1\xi}}{k_3-k_1}\int_\xi^0 f_2(s)e^{-k_1s}ds
+\bigg (\frac{e^{k_1\xi}-e^{k_4\xi}}{k_4-k_1}+\frac{e^{k_4\xi}}{k_4-k_2}\bigg )\int_{-\infty}^\xi f_2(s)e^{-k_4s}ds\nonumber\\
&\phantom{-\frac{A}{H_{1,\lambda}}\bigg\{\;\,}+\frac{e^{k_1\xi}}{k_4-k_1}\int_\xi^0 f_2(s)(e^{-k_4s}\!-\!e^{-k_1s})ds\!+\! \frac{(k_4-k_3)e^{k_2\xi}}{(k_3-k_2)(k_4-k_2)}\int_{-\infty}^{\xi}f_2(s)e^{-k_2s}ds\bigg]\bigg\},
\\[1mm]
\label{re-v-negative}
v(\xi)&=\bigg (c_3+\frac{\Le}{H_{2,\lambda}}\int_{\xi}^0f_2(s)e^{-k_3s}ds\bigg )e^{k_3\xi}
+\frac{\Le\,e^{k_4\xi}}{H_{2,\lambda}}\int_{-\infty}^{\xi}f_2(s)e^{-k_4s}ds,
\end{align}
for $\xi<0$. Note that $k_2-k_3\neq 0$ (see Appendix \ref{appendix-A}). For $\xi>0$, we get
\begin{align}
\label{re-u-positive}
u(\xi)&=\frac{e^{k_1\xi}}{H_{1,\lambda}}\int_{\xi}^{\infty}f_1(s)e^{-k_1s}ds+\bigg (c_6+{\frac{1}{H_{1,\lambda}}\int_0^{\xi}f_1(s)e^{-k_2s}ds}\bigg )e^{k_2\xi},\\[1mm]
\label{re-v-positive}
v(\xi)&=\frac{\Le\,e^{k_5\xi}}{H_{3,\lambda}}\int_{\xi}^{\infty}f_2(s)e^{-k_5s}ds+\bigg (c_8+\frac{\Le}{H_{3,\lambda}}\int_0^{\xi}f_2(s)e^{-k_6s}ds\bigg )e^{k_6\xi}.
\end{align}

Imposing the boundary conditions, we obtain the following linear system for the unknowns $c_1$, $c_3$, $c_6$ and $c_8$:
\begin{equation}
\begin{pmatrix}
1 &\frac{A}{(k_3-k_2)H_{1,\lambda}} & -1 & 0\\
0 & 1 & 0 & -1\\
k_1 & \frac{Ak_2}{(k_3-k_2)H_{1,\lambda}} & \frac{1}{\Theta_i-1}-k_2 & 0\\
0 & k_3 & \frac{\Le}{1-\Theta_i} & -k_6
\end{pmatrix}
\begin{pmatrix}
c_1\\
c_3\\
c_6\\
c_8
\end{pmatrix}
=\begin{pmatrix}
F_1\\
F_2\\
F_3\\
F_4
\end{pmatrix},
\label{matrix}
\end{equation}
where
\begin{align*}
F_1=&-\frac{A\Le}{(k_4-k_2)H_{1,\lambda}H_{2,\lambda}}{\mathscr I}_4-\frac{1}{H_{1,\lambda}}{\mathscr I}_2+\frac{1}{H_{1,\lambda}}{\mathscr I}_1
-\frac{A\Le(k_4-k_3)}{(k_3-k_2)(k_4-k_2)H_{1,\lambda}H_{2,\lambda}}{\mathscr I}_3;\\[1mm]
F_2=&\frac{\Le}{H_{3,\lambda}}{\mathscr I}_5-\frac{\Le}{H_{2,\lambda}}{\mathscr I}_4;\\[1mm]
F_3=&-\frac{A\Le k_2}{(k_4-k_2)H_{1,\lambda}H_{2,\lambda}}{\mathscr I}_4\!-\!\frac{k_2}{H_{1,\lambda}}{\mathscr I}_2\!+\!\frac{1}{H_{1,\lambda}}\bigg (k_1\!+\!\frac{1}{1-\Theta_i}\bigg ){\mathscr I}_1\!+\!\frac{A\Le k_2}{(k_3-k_2)(k_4-k_2)H_{1,\lambda}}{\mathscr I}_3;\\[1mm]
F_4=&\frac{\Le k_5}{H_{3,\lambda}}{\mathscr I}_5-\frac{\Le k_4}{H_{2,\lambda}}{\mathscr I}_4-\frac{\Le}{(1-\Theta_i)H_{4,\lambda}}{\mathscr I}_1.
\end{align*}
This system is uniquely solvable if and only if $\overline{D}(\lambda;\Theta_i,\Le)=[\Le (k_2-k_3)]^{-1}D(\lambda; \Theta_i, \Le)$, the
determinant of the matrix in left-hand side of \eqref{matrix}, does not vanish, where
$D(\lambda; \Theta_i, \Le)$ is defined in \eqref{d,r,Le}.
Hence, the solutions to the equation $D(\lambda; \Theta_i, \Le)=0$ are elements of the point spectrum of $L$. Property (3) is proved.
On the other hand, as it is easily seen, if $\lambda\notin (-\infty,-1/4]\cup {\mathcal P}$ is not a root of the dispersion relation, then
it is easy to check that the function $\pmb u$ given by \eqref{re-u-negative}-\eqref{matrix} belongs to $D(L)$, so that $\lambda$ is an element of the resolvent set of operator $L$.

Finally, we consider the case when $k_3=k_1$, which gives $\lambda=\lambda_{\pm}:=-\frac{A\Le}{\Le-1}\pm \frac{i\sqrt{A\Le(\Le-1)}}{\Le-1}$ (see Appendices \ref{appendix-A} and \ref{appendix-B}).
It is easy to check that this pair of conjugate complex numbers does not belong to ${\mathcal P}$.
It thus follows that $u$ for $\xi\ge 0$ and $v$ for $\xi\in\R$ are still given by \eqref{re-v-negative}, \eqref{re-u-positive} and \eqref{re-v-positive}.
On the other hand, for $\xi<0$, $u$ is given by
\begin{align*}
u(\xi)=&c_1e^{k_1\xi}-\frac{Ac_3}{H_{1,\lambda}}\xi e^{k_1\xi}+\frac{e^{k_1\xi}}{H_{1,\lambda}}\int_{\xi}^0f_1(s)e^{-k_1s}ds+\frac{e^{k_2\xi}}{H_{1,\lambda}}\int_{-\infty}^{\xi}f_1(s)e^{-k_2s}ds\\
&+\frac{A\Le\, e^{k_1\xi}}{H_{1,\lambda}H_{2,\lambda}}\int_{\xi}^0(s-\xi)f_2(s)ds-\frac{A\Le\, e^{k_1\xi}}{H_{1,\lambda}H_{2,\lambda}^2}\int_{-\infty}^0f_2(s)e^{-k_4s}ds\\
&+\frac{A\Le\ e^{k_1\xi}}{H_{1,\lambda}H_{2,\lambda}^2}\int_{\xi}^0f_2(s)e^{-k_1s}ds+\frac{A\Le\, e^{k_4\xi}}{H_{1,\lambda}H_{2,\lambda}^2}\int_{-\infty}^{\xi}f_2(s)e^{-k_4s}ds\\
&+\frac{A}{H_{1,\lambda}}\bigg\{
\frac{e^{k_1\xi}}{k_1-k_2}c_3+\frac{\Le}{H_{2,\lambda}}\bigg[
\frac{e^{k_4\xi}}{k_4-k_2}\int_{-\infty}^{\xi}f_2(s)e^{-k_4s}ds-\frac{e^{k_1\xi}}{k_1-k_2}\int_{\xi}^0 f_2(s)e^{-k_1s}ds \nonumber\\
&\phantom{+\frac{A}{H_{1,\lambda}}\bigg\{\frac{e^{k_1\xi}}{k_1-k_2}c_3+\frac{\Le}{H_{2,\lambda}}\bigg[\;\,}+ \frac{(k_4-k_1)e^{k_2\xi}}{(k_1-k_2)(k_4-k_2)}\int_{-\infty}^{\xi}f_2(s)e^{-k_2s}ds\bigg]\bigg\}.
\end{align*}
Notice that $\sup_{\xi<0}e^{\frac{1}{2}\xi}|u(\xi)|<\infty$; therefore, $\pmb u$ belongs to $\bm{\mathcal W}$. Imposing the boundary conditions, we get a linear system for the unknowns $(c_1,c_3,c_6,c_8)$, whose matrix is the same as in
\eqref{matrix}. Since the determinant is not zero when $\lambda=\lambda_{\pm}$ (see Appendix \ref{appendix-B}) and the first- and second-order derivatives of
 $\pmb u$ belong to $\pmb {\mathcal W}$, we conclude that $\lambda_{\pm}$ are in the resolvent set of operator $L$.

\vskip 1mm
{\em Step 3}.
Now, we proceed to show that $0$ is an isolated simple eigenvalue of the operator $L$.
In view of the previous steps, in a neighborhood of $\lambda=0$ the solution $\pmb u=R(\lambda,L)\pmb f$ of the equation $\lambda \pmb u-L\pmb u=\pmb f$ is given by
\eqref{re-u-negative}-\eqref{re-v-positive} for any $\pmb f\in\bm{\mathcal W}$, where
\begin{align*}
c_1=&\frac{\Le (k_2\!-\!k_3)}{D(\lambda;\Theta_i,\Le)}\bigg\{
\bigg [\frac{(k_6\!-\!k_3)(1-\Theta_i)}{\Le}\!-\!\frac{A}{(k_3\!-\!k_2)H_{1,\lambda}}\bigg]{\mathscr I}_1\!+\!\frac{k_6\!-\!k_3}{\Le H_{1,\lambda}}{\mathscr I}_2
\!-\!\frac{A(k_6\!-\!k_3)}{(k_3\!-\!k_2)(k_4\!-\!k_2)H_{1,\lambda}}{\mathscr I}_3\\
&\phantom{\frac{\Le (k_2\!-\!k_3)}{D(\lambda;\Theta_i,\Le)}\bigg\{\,}
+\frac{A}{H_{1,\lambda}H_{2,\lambda}}\bigg(\frac{k_6-k_3}{k_4-k_2} -\frac{k_6-k_4}{k_3-k_2} \bigg){\mathscr I}_4-\frac{A}{(k_3-k_2)H_{1,\lambda}}{\mathscr I}_5\bigg\},\\[1mm]
c_3=&\frac{\Le (k_2\!-\!k_3)}{D(\lambda;\Theta_i,\Le)}\bigg\{{\mathscr I}_1+{\mathscr I}_2-\frac{A\Le}{(k_4-k_2)(k_3-k_2)}{\mathscr I}_3\\
&\phantom{\frac{\Le (k_2\!-\!k_3)}{D(\lambda;\Theta_i,\Le)}\bigg\{\;\,}
+\frac{1}{H_{2,\lambda}}\bigg [(k_6\!-\!k_4)\big[1-H_{1,\lambda}(1-\Theta_i)\big]\!+\!\frac{A\Le }{k_4-k_2} \bigg]{\mathscr I}_4
\!+\!\big[1-H_{1,\lambda}(1-\Theta_i)\big]{\mathscr I}_5\bigg\},\\[1mm]
c_6=&\frac{\Le (k_2\!-\!k_3)}{D(\lambda;\Theta_i,\Le)}\bigg\{\frac{1}{H_{1,\lambda}}\bigg (\frac{A}{k_3-k_2}+\frac{k_6-k_3}{\Le}\bigg ){\mathscr I}_1
\!+\!\frac{(k_6\!-\!k_3)(1\!-\!\Theta_i)}{\Le}{\mathscr I}_2\!-\!\frac{A(k_6\!-\!k_3)(1\!-\!\Theta_i)}{(k_3\!-\!k_2)(k_4\!-\!k_2)}{\mathscr I}_3
\\
&\phantom{\frac{\Le (k_2-k_3)}{D(\lambda;\Theta_i,\Le)}\bigg\{\;\,}
\!+\!\frac{A(1\!-\!\Theta_i)}{H_{2,\lambda}}\bigg(\frac{k_6\!-\!k_3}{k_4\!-\!k_2}\!-\!\frac{k_6\!-\!k_4}{k_3\!-\!k_2}\bigg){\mathscr I}_4
-\frac{A(1\!-\!\Theta_i)}{k_3\!-\!k_2}{\mathscr I}_5
\bigg\},\\[1mm]
c_8=&\frac{\Le (k_2\!-\!k_3)}{D(\lambda;\Theta_i,\Le)}\bigg\{{\mathscr I}_1\!+\!{\mathscr I}_2\!-\!\frac{A\Le}{(k_3\!-\!k_2)(k_4\!-\!k_2)}{\mathscr I}_3+\bigg[1\!-\!H_{1,\lambda}(1\!-\!\Theta_i)\!+\!\frac{A\Le}{(k_3\!-\!k_2)(k_4\!-\!k_2)}\bigg]{\mathscr I}_4
\\
&\phantom{\frac{\Le (k_2\!-\!k_3)}{D(\lambda;\Theta_i,\Le)}\bigg\{}
+\bigg[\frac{A\Le}{(k_3-k_2)H_{3,\lambda}}
+[1-H_{1,\lambda}(1-\Theta_i)]\bigg (1+\frac{k_6-k_3}{H_{3,\lambda}}\bigg )\bigg]{\mathscr I}_5\bigg\}.
\end{align*}
As it is immediately seen, the function $D(\cdot;\Theta_i,\Le)$ is analytic in a neighborhood of $\lambda=0$, which is simple zero of such a function, and the other functions appearing in \eqref{re-u-negative}-\eqref{re-v-positive} are holomorphic in a neighborhood of $\lambda=0$. Hence, we conclude that zero is a simple pole of the resolvent operator $R(\lambda,L)$.
Since $\displaystyle\frac{d\pmb U}{d\xi}$ belongs to the kernel of $L$ (see Remark \ref{rem-simple}) and the matrix in \eqref{matrix} has rank three at $\lambda=0$, this function
generates the kernel, so that the geometric multiplicity of the eigenvalue $\lambda=0$ is one. This is enough to conclude that
$\lambda=0$ is a simple eigenvalue of $L$. Property (2) is established and the spectrum of $L$ is completely characterized.

\vskip 1mm
{\em Step 4}. In order to prove that $L$ is sectorial, it is sufficient to show that there exist two positive constants $C$ and $M$ such that
\begin{align}
\label{resolvent estimate}
\|R(\lambda,L)\|_{L(\bm{\mathcal W})}\le
C|\lambda|^{-1},\qquad\;\,\Re{\lambda}\ge M.
\end{align}
Without loss of generality, we can assume that $k_{1,\lambda}\neq k_{3,\lambda}$ and the conditions in \eqref{cond-1} are all satisfied if $\Re\lambda\ge M$.
Throughout this step, $C_j$ denotes a positive constant, independent of $\lambda$ and $\pmb f\in\bm{\mathcal W}$.

We begin by estimating the terms $H_{j,\lambda}$ ($j=1,2,3$). As it is easily seen,
\begin{align}
|H_{2,\lambda}|\ge\Re(H_{2,\lambda})=\sqrt{\frac{|{\Le}^2+4\Le(A+\lambda)|+{\Le}^2+4\Le(A+\Re\lambda)}{2}}\ge\sqrt{2\Le|\lambda|}
\label{estim-H1}
\end{align}
for any $\lambda\in\C$ with positive real part.
Since $H_{1,\lambda}$ and $H_{3,\lambda}$ can be obtained from $H_{2,\lambda}$, by taking, $(\Le,A)=(1,0)$ and $(\Le,A)=(\Le,0)$ respectively, we also deduce that
\begin{align}
|H_{1,\lambda}|\ge\Re(H_{1,\lambda})\ge\sqrt{2|\lambda|},\qquad\;\,|H_{3,\lambda}|\ge\Re(H_{3,\lambda})\ge\sqrt{2\Le|\lambda|}
\label{estim-H2-H3}
\end{align}
for the same values of $\lambda$.
Thanks to \eqref{estim-H1} and \eqref{estim-H2-H3}, we can easily estimate the terms
${\mathscr I}_j$ $(j=1,\ldots,5)$. Indeed, since $\Re(k_1)+1/2>0$, we obtain
\begin{align*}
|{\mathscr I}_1|&=\bigg |\int_0^{\infty}f_1(s)e^{-k_1s}ds\bigg |\le \sup_{\xi>0}e^{\frac{1}{2}\xi}|f_1(\xi)|\int_0^{\infty}e^{-\frac{1}{2}\Re(H_{1,\lambda})s}ds \le C_1|\lambda|^{-\frac{1}{2}}\|\pmb f\|_{\bm{\mathcal W}}.
\end{align*}
The other terms ${\mathscr I}_j$ can be treated likewise and we get
$\sum_{j=2}^5|{\mathscr I}_j|\le C_2|\lambda|^{-\frac{1}{2}}\|\pmb f\|_{\bm{\mathcal W}}$ for every $\pmb f\in\bm{\mathcal W}$ and $\lambda\in\C$ with positive real part.

Next, we turn to the function $D(\cdot;\Theta_i,\Le)$. We observe that
\begin{align*}
|D(\lambda;\Theta_i,\Le)|\ge [(1-\Theta_i)\sqrt{|1+4\lambda|}-1]|k_{6,\lambda}-k_{3,\lambda}||k_{3,\lambda}-k_{2,\lambda}|-A\Le
\end{align*}
for any $\lambda\in\C$.
Taking \eqref{estim-H1} and \eqref{estim-H2-H3} into account, we can show that
\begin{equation}
C_3\sqrt{|\lambda|}\le |k_{3,\lambda}-k_{2,\lambda}|+|k_{3,\lambda}-k_{6,\lambda}|\le C_4\sqrt{|\lambda|}
\label{estimate-k1-k4}
\end{equation}
for $\lambda\in\C$ with sufficiently large positive real part. Hence, for such values of $\lambda$'s we can continue the previous inequality and get
\begin{align}
|D(\lambda;\Theta_i,\Le)|\ge C_5|\lambda|^{\frac{3}{2}}.
\label{estimate-D}
\end{align}
Similarly, $|k_{6,\lambda}-k_{4,\lambda}|\le C_6\sqrt{|\lambda|}$
for any $\lambda$ with positive real part and
\begin{equation}
|k_{4,\lambda}-k_{2,\lambda}|\ge \frac{1}{2}|H_{2,\lambda}|-\frac{1}{2}|H_{1,\lambda}|-\frac{\Le-1}{2}
\ge \sqrt{\frac{\Le|\lambda|}{2}}-\sqrt{\frac{|\lambda|}{2}}-\frac{\Le-1}{2}\ge C_7\sqrt{|\lambda|},
\label{estim-k2-k4}
\end{equation}
if $\Re\lambda$ is sufficiently large. From \eqref{estim-H1}-\eqref{estim-k2-k4} we infer that
$|c_1|+|c_3|+|c_6|+|c_8|\le C_8|\lambda|^{-1}$ for any $\lambda\in\C$ with $\Re(\lambda)\ge M$ and
a suitable positive constant $M$. Further, observing that
\begin{eqnarray*}
|k_{3,\lambda}-k_{1,\lambda}|+|k_{4,\lambda}-k_{1,\lambda}|\ge C_9\sqrt{|\lambda|},\qquad\;\,|k_{4,\lambda}-k_{3,\lambda}|\le C_{10}\sqrt{|\lambda|},
\end{eqnarray*}
we are now able to estimate the functions $u$ and $v$ in \eqref{re-u-negative}-\eqref{re-v-positive} and show that
\eqref{resolvent estimate} holds true. The proof is complete.
\end{proof}

\begin{remark}
\rm{It is worth pointing out that, as $\Le \to \infty$, the set ${\mathcal P}$ degenerates into a vertical line $\Re \lambda=-\Theta_i(1-\Theta_i)^{-1}-1/2$. In the limit case, the system is partly parabolic and the semigroup is not analytic, see, e.g., \cite[Section 1, p. 2435]{GLS10}.}
\end{remark}

\setcounter{tocdepth}{2}
\section{The fully nonlinear problem}
\label{sect-3}
Our goal in this section is to get rid of the eigenvalue 0 and then derive a new fully nonlinear problem.
We recall that the eigenvalue $0$ is related to the translation invariance of the traveling wave. In a first step, we use here a method similar to that of \cite{BLS92} or \cite[p. 358]{Lunardi96}.

\subsection{Ansatz revisited: elimination of the eigenvalue $0$}
\label{subsect-2.3}
It is convenient to write System \eqref{u_1}-\eqref{u_2} with notation $\pmb u=(u_1,u_2)$, $\pmb U=(\Theta^0,\Phi^0)$, see Section \ref{fixed}, in an abstract form:
\begin{equation}
\label{v}
\dot{\pmb u}=L\pmb u+\dot{s}{\pmb U}^{\prime}+\dot{s}{\pmb u}^{\prime}.
\end{equation}
Note that, in view of \eqref{interface-u}, $\pmb u(\tau,\cdot)$ belongs to $D(L)$ for each $\tau$.
Since 0 is an isolated simple eigenvalue of $L$, we can introduce the spectral projection $P$ onto the kernel of $L$, defined by
$P\pmb f=\langle\pmb f,{\pmb e^{*}}\rangle\pmb U^{\prime}$ for every $\pmb f\in\bm{\mathcal W}$ and a unique ${\pmb e^{*}}\in \bm{\mathcal W}^*$, the dual space of $\bm{\mathcal W}$, such that $\langle\pmb U^{\prime},{\pmb e^{*}}\rangle=1$. For further use, we recall that $P$ commutes with $L$ on $D(L)$.
We are going to apply the projections $P$ and $Q=I-P$ to System ($\ref{v}$) to remove the eigenvalue $0$.

\medskip
\paragraph{\bf Ansatz 2} We split $\pmb u$ into $\pmb u(\tau,\cdot)=P\pmb u(\tau,\cdot)+Q\pmb u(\tau,\cdot)=p(\tau)\pmb U^{\prime}+\pmb w(\tau,\cdot)$, i.e.,
\begin{align}
u_1(\tau, \xi)=&p(\tau)\frac{d\Theta^0}{d\xi}(\xi)+w_1(\tau, \xi),\label{splitting-w1}\\
 u_2(\tau, \xi)=&p(\tau)\frac{d\Phi^0}{d\xi}(\xi)+w_2(\tau, \xi),\notag
\end{align}
where $p(\tau)=\langle \pmb u(\tau),{\pmb e^{*}}\rangle $ and $\pmb w=(w_1,w_2)$. Clearly, $\pmb w(\tau,\cdot)\in Q(D(L))$ for each $\tau$.
It follows from ($\ref{v}$)  that
\begin{equation}
\dot{p}=\dot{s}+\dot{s}\langle\pmb u^{\prime},{\pmb e^{*}}\rangle,\qquad\;\,
\dot{\pmb w}=L\pmb w+\dot{s}Q\pmb u^{\prime},
\label{w}
\end{equation}
a Lyapunov-Schmidt-like reduction of the problem. We point out that the above procedure generates a new ansatz slightly different from ansatz 1 (see \eqref{ansatz1}) that helps us determine the functional framework.

Thanks to new ansatz 2, we are going to derive an equation for $\pmb w$ in the space $\bm{\mathcal W}$. Now, the spectrum of the part of $L$ in $Q(\bm{\mathcal W})$ does not contain the eigenvalue $0$.

\subsection{Derivation of the fully nonlinear equation}
To get a self-contained equation for $\pmb w$, we need to eliminate $\dot{s}$ from the right-hand side of the second equation in \eqref{w}. For this purpose,
we begin by evaluating the first component of \eqref{w} at $\xi=0^+$ to get
\begin{align}
\frac{\partial w_1}{\partial\tau}(\cdot,0^+)=&(L\pmb w)_1(\cdot,0^+)+\dot{s}(Q\pmb u')_1(\cdot,0^+)\notag\\
=&(L\pmb w)_1(\cdot,0^+)+\dot{s}\frac{\partial u_1}{\partial \xi}(\cdot,0^+)+\dot{s}\langle\pmb u',{\pmb e^{*}}\rangle\Theta_i.
\label{e}
\end{align}
Next, we observe that the function $w_1$ is continuous (but not differentiable) at $\xi=0$, since both $\pmb u$ and $\pmb U'$ are continuous at $\xi=0$. Therefore, evaluating \eqref{splitting-w1} at $\xi=0$ and recalling that $u_1(\tau,0)=0$ (see \eqref{interface-u}), we infer that
$w_1(\tau,0)=\Theta_ip(\tau)$. Differentiating this formula yields
\begin{align}
\frac{\partial w_1}{\partial\tau}(\cdot,0)=\dot{p}\Theta_i=\dot{s}\Theta_i+\dot{s}\langle \pmb u',{\pmb e^{*}}\rangle\Theta_i,
\label{key}
\end{align}
From  ($\ref{e}$) and ($\ref{key}$), it follows that
\begin{equation}
\label{s}
\dot{s}\Theta_i=(L\pmb w)_1(\cdot,0^+)+\dot{s}\frac{\partial u_1}{\partial \xi}(\cdot,0^+).
\end{equation}
To get rid of the spatial derivatives of $u_1$ from the right-hand side of \eqref{s}, we use \eqref{splitting-w1} to write
\begin{eqnarray}
\label{u_1,w_1}
\frac{\partial u_1}{\partial \xi}(\cdot,0^+)=p\frac{d^2\Theta^0}{d\xi^2}(0^+)+w'_1(\cdot,0^+)
=w_1(\cdot,0)+w'_1(\cdot,0^+).
\end{eqnarray}
Plugging ($\ref{u_1,w_1}$) into ($\ref{s}$), we finally obtain the formula
\begin{equation}\label{shift}
\dot s=\frac{(L\pmb w)_1(\cdot,0^+)}{\Theta_i-w_1(\cdot,0)-w'_1(\cdot,0^+)},
\end{equation}
which can be regarded as a underlying \textit{second-order Stefan condition}, see \cite{BL18}.
Hence, replacing it in ($\ref{w}$), we get
\begin{align*}
\frac{\partial\pmb w}{\partial\tau}
=&L\pmb w+\frac{(L\pmb w)_1(\cdot,0^+)}{\Theta_i-w_1(\cdot,0)-w'_1(\cdot,0^+)}Q\pmb u' \nonumber\\
=&L\pmb w+\frac{(L\pmb w)_1(\cdot,0^+)}{\Theta_i-w_1(\cdot,0)-w'_1(\cdot, 0^+)}Q\bigg (\frac{w_1(\cdot,0)}{\Theta_i}\pmb{U''}+\pmb{w'}\bigg ),
\end{align*}
which is a fully nonlinear parabolic equation in the space $\bm{\mathcal W}$ written in a more abstract form:
\begin{equation}\label{FNLE}
\frac{\partial\pmb w}{\partial\tau} = L\pmb w + {F}(\pmb w), \quad  {\pmb w}\in Q(D(L)).
\end{equation}
and is going to be the subject of our attention.
Note that Equation \eqref{FNLE} is fully nonlinear since the function $F$ depends on $\pmb w$ also through the limit at $0^+$ of $L\pmb w$.
Moreover, the operator $L$ is sectorial in $Q(\bm{\mathcal W})$.
Hence, we can take advantage of the theory of analytic semigroups to solve Equation \eqref{FNLE}. We refer the reader to \cite[Chapter 4]{Lunardi96} for further details.

\setcounter{tocdepth}{2}
\section{Stability of the traveling wave solution}\label{stability}
This section is devoted to the analysis of the stability of the traveling wave solution  $\pmb U$. Here, stability refers to orbital stability with asymptotic phase $s_{\infty}$.	
 From now on, we focus on the asymptotic situation where the Lewis number, $\Le$, is large and, in this respect, we use the notation  $\varepsilon = 1/\Le$ to stand for a small perturbation parameter. Simultaneously, we assume that $\Theta_i$ is close to the burning temperature normalized at unity, which is physically relevant (see \cite[Section 3.2, Fig. 5]{BGKS15}). More specifically, we restrict $\Theta_i$ to the domain  $\frac{2}{3}<\Theta_i <1$.

In what follows, we introduce \hbox{$m:= \Theta_i/(1-\Theta_i)$} as the \textit{bifurcation parameter} which runs in the interval $(2,\infty)$, due to the choice of $\Theta_i$.
With the above notation, $A =m+\varep m^2$ and the \textit{dispersion relation} $D(\lambda;\Theta_i,\Le)$ (see \eqref{d,r,Le}) in Section 2 reads:
\begin{align}
\label{dispersion epsilon}
D_{\varepsilon}(\lambda;m) =&-\frac{1}{4}\big(\sqrt{1+4\varepsilon(m+\varepsilon m^2+\lambda)}+\sqrt{1+4\varepsilon \lambda}\big)\notag\\
&\qquad\times\bigg(\frac{1}{\varepsilon}[\sqrt{1+4\varepsilon(m+\varepsilon m^2+\lambda)}-1]\!+\!1\!+\!\sqrt{1+4\lambda}\bigg)\!
\bigg(1\!-\!\frac{\sqrt{1+4\lambda}}{1+m}\bigg )\!+\!m\!+\!\varepsilon m^2.
\end{align}

This section is split into two parts. First, we study the stability of the null solution of the fully nonlinear equation \eqref{FNLE}. Second, we turn our attention to the stability of the traveling wave.

\subsection{Stability of the null solution of (\ref{FNLE})}
To begin with, we recall that the spectrum of the part of $L$ in $\bm{\mathcal W}_Q:=Q(\bm{\mathcal W})$ is the set
\begin{align*}
\left (-\infty,-{\textstyle \frac{1}{4}}\right ]\cup\mathcal{P}\cup\{\lambda\in\C\setminus\{0\}:D_{\varepsilon}(\lambda;m)=0\}.
\end{align*}
As we will show, the roots of the dispersion relation $D_{\varepsilon}(\cdot;m)$ are finitely many.
As a consequence, there is a gap between the spectrum of this operator and the imaginary axis (at least for $\varepsilon$ small enough).
In view of the principle of linearized stability, the main step in the analysis of the stability of the null solution of Equation \eqref{FNLE} is a deep insight in the solutions of the dispersion relation. More precisely, we need to determine when they are all contained in the left halfplane and when some of them lie in the right halfplane.

The limit critical value $m^c=6$ will play an important role in the analysis hereafter.

\begin{theorem}
\label{stability theorem FNLE}
The following properties are satisfied.
\begin{enumerate}[\rm (i)]
\item
Let $m\in (2,m^c)$ be fixed. Then, there exists $\varepsilon_0=\varepsilon_0(m)>0$ such that, for $\varepsilon\in (0,\varepsilon_0)$, the null solution of the fully nonlinear problem \eqref{FNLE} is stable with respect to perturbations belonging to $Q(D(L))$.
\item
Let $m>m^c$ be fixed. Then, there exists $\varepsilon_1=\varepsilon_1(m)$ small enough such that, for $\varepsilon\in (0,\varepsilon_1)$, the null solution of \eqref{FNLE} is unstable with respect to perturbations belonging to $Q(D(L))$.
\end{enumerate}
\end{theorem}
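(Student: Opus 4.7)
The plan is to apply the principle of linearized stability for fully nonlinear parabolic equations (in the form of \cite[Chapter 9]{Lunardi96}). Since $L$ is sectorial on $\bm{\mathcal W}$ by Theorem \ref{thm-2.3} and the projection $Q=I-P$ commutes with $L$, the restriction $L|_{\bm{\mathcal W}_Q}$ is sectorial with spectrum
\begin{equation*}
\sigma(L|_{\bm{\mathcal W}_Q}) = (-\infty,-1/4] \cup \mathcal{P} \cup \{\lambda\neq 0 : D_\varep(\lambda;m)=0\}.
\end{equation*}
The first component already lies in the open left halfplane. The parabola $\mathcal{P}$ has rightmost point at $\Re\lambda = -c/a$ with $a=(1-\varep)^2\to 1$ and $c\to(2m+1)/2$ as $\varep\to 0$, so for $\varep$ sufficiently small $\mathcal{P}$ is contained in $\{\Re\lambda<-\gamma\}$ for some $\gamma>0$. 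Hence the question of stability reduces to locating the zeros of $D_\varep(\cdot;m)=0$: for (i) all of them must sit strictly in the open left halfplane, and for (ii) at least one must sit in the open right halfplane.

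I would next identify the limit dispersion relation. Using the expansions $\varep^{-1}[\sqrt{1+4\varep(m+\varep m^2+\lambda)}-1]\to 2(m+\lambda)$ and $\sqrt{1+4\varep\lambda},\sqrt{1+4\varep(m+\varep m^2+\lambda)}\to 1$ in \eqref{dispersion epsilon} yields
\begin{equation*}
D_0(\lambda;m) := -\tfrac{1}{2}\bigl(2m+2\lambda+1+\sqrt{1+4\lambda}\bigr)\biggl(1-\frac{\sqrt{1+4\lambda}}{1+m}\biggr) + m,
\end{equation*}
and on any bounded domain avoiding the branch cut of $\sqrt{1+4\lambda}$ the convergence $D_\varep(\cdot;m)\to D_0(\cdot;m)$ is uniform, with all $D_\varep$ analytic. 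The next step is to analyze $D_0(\cdot;m)=0$ algebraically via the substitution $\mu=\sqrt{1+4\lambda}$, which reduces the dispersion relation to a polynomial equation in $\mu$. Root counting should show that for $m\in(2,6)$ every solution satisfies $\Re\lambda<0$, for $m>6$ some solution satisfies $\Re\lambda>0$, and at $m=m^c=6$ a single conjugate pair crosses the imaginary axis transversally. With an a priori bound $|\lambda|\le C(m)$ for the zeros of $D_\varep(\cdot;m)$ uniform in small $\varep$, all zeros are confined to a fixed disk, so Hurwitz's theorem transfers the zero configuration from $D_0$ to $D_\varep$ whenever $\varep<\varep_0(m)$. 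Combined with the parabola estimate, this yields a spectral gap on the left of the imaginary axis when $m<6$ (giving (i)) and a simple unstable eigenvalue when $m>6$ (giving (ii)), and the principle of linearized stability applied to \eqref{FNLE} delivers the conclusions.

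The main obstacle is the root-counting step for $D_0(\cdot;m)=0$: one must prove that no zero lies in $\{\Re\lambda>0\}$ for $m\in(2,6)$, that the purely imaginary zeros at $m=6$ form a simple conjugate pair, and that they traverse the imaginary axis with nonzero speed $\partial_m\Re\lambda|_{m=6}>0$. A secondary technical point is the uniform a priori bound on the zeros of $D_\varep(\cdot;m)$: one has to rule out zeros escaping to infinity as $\varep\to 0$, by inspecting the leading-order behaviour of the dominant terms of \eqref{dispersion epsilon} for large $|\lambda|$, so that Hurwitz's theorem can be applied on a fixed compact disk independently of $\varep$.
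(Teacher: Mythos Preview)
Your plan matches the paper's approach closely: reduce to locating the zeros of $D_\varepsilon(\cdot;m)$, compute the limit dispersion relation $D_0$, analyze its roots, and transfer the configuration to $D_\varepsilon$ via Hurwitz together with the linearized stability machinery of \cite[Chapter 9]{Lunardi96}.

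A few remarks on points of contact and divergence. First, the paper carries out the root-counting step you flag as the main obstacle quite directly: it factors
\[
D_0(\lambda;m)=\frac{\sqrt{1+4\lambda}-1}{4(1+m)}\bigl[4\lambda-(m-2)\sqrt{1+4\lambda}+m+2\bigr],
\]
so apart from $\lambda=0$ the zeros come from a quadratic $4\lambda^2+(6m-m^2)\lambda+2m$ (after squaring and checking the constraint $\Re\lambda\ge-(m+2)/4$), giving $\Re\lambda_{1,2}=\tfrac18(m^2-6m)$ and hence the dichotomy at $m=6$ without any transversality argument. The transversality condition $\partial_m\Re\lambda|_{m=6}>0$ that you list among the obstacles is \emph{not} needed for this theorem; it belongs to the Hopf bifurcation analysis in Section~\ref{sect-5}.

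Second, your insistence on a uniform-in-$\varepsilon$ a priori bound for the zeros of $D_\varepsilon$ is well placed. The paper applies Hurwitz only on small disks around the limit roots and then asserts that ``all the elements of the spectrum\ldots have negative real parts,'' which tacitly presumes no zeros escape to infinity or hide elsewhere in $\{\Re\lambda\ge 0\}$ as $\varepsilon\to 0$. Your proposed route---extracting a large-$|\lambda|$ lower bound from the dominant terms of \eqref{dispersion epsilon} (in the spirit of estimate \eqref{estimate-D})---is the natural way to close this, and is a point on which your outline is more explicit than the paper's argument.
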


\begin{proof}
To begin with, we observe that the functions $D_{\varepsilon}(\cdot,m)$ are holomorphic in $\C\setminus (-\infty,-1/4]$ and therein they locally converge to the \textit{limit dispersion relation} $D_0(\cdot,m)$ defined by
\begin{align*}
D_0(\lambda;m)=&-\frac{1}{2}[2(m+\lambda)+1+\sqrt{1+4\lambda}]\left (1-\frac{\sqrt{1+4\lambda}}{1+m}\right )+m\notag\\
=&\frac{\sqrt{1+4\lambda}-1}{4(1+m)}[4\lambda-(m-2)\sqrt{1+4\lambda}+m+2],
\end{align*}	
as $\varepsilon\to 0^+$.
The solutions of the equation $D_0(\lambda;m)=0$ are $\lambda=0$, for all $m$,
and the roots of the second-order polynomial $4\lambda^2+(6m-m^2)\lambda+2m$, whose real part is not less than $-(m+2)/4$.
This polynomial admits conjugate solutions $\lambda_{1,2}= a(m) \pm ib(m)$, where $a(m)=\frac{1}{8}(m^2-6m)$ and $b(m)= \frac{1}{8}(m-2)\sqrt{|8m-m^2|}$, if $m\in (2,8)$ and real solutions $\lambda_{1,2}= a(m) \pm b(m)$ otherwise. The coefficient $a(m)$ is negative whenever $2<m<6$ and positive for $m>6$. It can be easily checked
that ${\rm Re}(\lambda_{1,2})\ge -(m+2)/4$ for each $m\in (2,\infty)$, so that $\lambda_{1,2}$ solve the equation $D_0(\lambda; m)=0$. In particular, there are two conjugate purely imaginary roots $\lambda_{1,2}=\pm\sqrt{3}i$ at $m=6$.

We can now prove properties (i) and (ii).

(i) Fix $\rho>0$ such that the closure of the disks of center $\lambda_{1,2}$ and radius $\rho$ is contained in $\{\Re z <0\}\backslash (-\infty,-\frac{1}{4}]$. Hurwitz Theorem (see, e.g., \cite[Chapter 7, Section 2]{Conway78}) and the above results show that there exists $\varepsilon_0 >0$ such that, for $\varepsilon\in (0,\varepsilon_0)$, $D_{\varepsilon}(\lambda;m)$ admits exactly two conjugate complex roots $\lambda_{1,2}(\varepsilon)$ in the disk $|\lambda-\lambda_{i}|<\rho$ and $\lambda_{i}(\varepsilon)$ converges to $\lambda_i$, as $\varepsilon \to 0$, for $i=1,2$. Therefore, all the elements of the spectrum of the part of operator $L$
in $\bm{\mathcal W}_Q$ have negative real parts, which implies that the operator norm of the restriction to $\bm{\mathcal W}_Q$ of the analytic semigroup $e^{\tau L}$ generated by $L$, decays to zero with exponential rate as $t\to\infty$. Now, the nonlinear stability follows from applying a standard machinery: the solution of Equation \eqref{FNLE}, with initial datum $\pmb w_0$ in a small (enough) ball of $Q(D(L))$ centered at zero, is given by the variation-of-constants-formula
\begin{eqnarray*}
\pmb w(\tau,\cdot)=e^{\tau L}\pmb w_0+\int_0^{\tau}e^{(\tau-s)L}F(\pmb w(s,\cdot))ds,\qquad\;\,\tau>0.
\end{eqnarray*}
Applying the Banach fixed point theorem in the space
\begin{eqnarray*}
\bm{\mathcal X}^{\alpha}_{\omega}\!=\!\bigg\{\pmb w\!\in\! C([0,\infty);\pmb{\mathcal W}_Q):\sup_{\sigma\in (0,1)}\sigma^{\alpha}\|\pmb w\|_{C^{\alpha}([\sigma,1];D(L))}<\infty: \tau\mapsto e^{\omega\tau}\pmb w(\tau,\cdot)\!\in\! C^{\alpha}([1,\infty);D(L))\bigg\},
\end{eqnarray*}
endowed with the natural norm, where $\alpha$ is fixed in $(0,1)$ and $\omega$ is any positive number less than the real part of
$\lambda_1(\varepsilon)$, allows us to prove the existence and uniqueness of a solution $\pmb w$ of \eqref{FNLE}, defined in $(0,\infty)$ such that
$\|\pmb w(\tau,\cdot)\|_{\pmb{\mathcal W}}+\|L{\pmb w}(\tau,\cdot)\|_{\pmb{\mathcal W}}
\le Ce^{-\omega\tau}\|\pmb w_0\|_{D(L)}$ for $\tau\in (0,\infty)$ and some positive constant $C$, which yields the claim. For further details see \cite[Chapter 9]{Lunardi96}.

(ii) For $m>m^c$, we use again Hurwitz Theorem to show that there exists $\varepsilon_1=\varepsilon_1(m)>0$ such that the equation
$D_{\varepsilon}(\lambda,m)=0$ admits a solution with positive real part if $\varepsilon\in (0,\varepsilon_1)$. More precisely, it admits a couple of conjugate complex roots with positive real parts, if $m<8$, a positive root, if $m=8$, and two real solutions if $m>8$. For these values of $\varepsilon$, the restriction of the semigroup $e^{\tau L}$ to $\bm{\mathcal W}_Q$  exhibits an exponential dichotomy, i.e., there exists a spectral projection $P_+$ which allows to split $\bm{\mathcal W}_Q=P_+(\bm{\mathcal W}_Q)\oplus (I-P_+)(\bm{\mathcal W}_Q)$. The semigroup $e^{\tau L}$ decays to zero with exponential rate when restricted to $(I-P)(\bm{\mathcal W}_Q)$, whereas the restriction of $e^{\tau L}$ to $P_+(\bm{\mathcal W}_Q)$
extends to a group which decays to zero with exponential rate as $\tau\to-\infty$. Again with a fixed point technique, we can prove the existence of a nontrivial backward solution
$\pmb z$ of the nonlinear equation \eqref{FNLE}, defined in $(-\infty,0)$ such that
$\|\pmb z(\tau,\cdot)\|_{\pmb{\mathcal W}}+\|L\pmb z(\tau,\cdot)\|_{\pmb{\mathcal W}}\le C_{\omega}e^{\omega\tau}$ for $\tau\in (-\infty,0)$ and
any $\omega$ positive and smaller than the minimum of the positive real parts of the roots of the dispersion relation.
The sequence $(\pmb z_n)$ defined by $\pmb z_n=\pmb z(-n,\cdot)$ vanishes in $D(L)$ as $n\to+\infty$ and the solution $\pmb w_n$ to \eqref{FNLE} subject to the initial condition
$\pmb w_n(0,\cdot)=\pmb z_n$ exists at least in the time domain $[0,n]$, where it coincides with the function $\pmb z(\cdot-n,\cdot)$. Thus,
the norm of $\|\pmb w_n\|_{C([0,n];\pmb{\mathcal W}_Q)}$ is positive and far way from zero, uniformly with respect to $n\in\N$, whence the instability of the trivial solution of
\eqref{FNLE} follows. Again, we refer the reader to \cite[Chapter 9]{Lunardi96} for further results.
\end{proof}
     	
\subsection{Stability of the traveling wave}
We can now rewrite the results in Theorem \ref{stability theorem FNLE} in terms of problem
\eqref{perturbation T}-\eqref{interface-Theta-Phi}.
\begin{theorem}
\label{stability theorem TW}
The following properties are satisfied.
\begin{enumerate}[\rm (i)]
\item
For $m\in (2,m^c)$ fixed, there exists $\varepsilon_0=\varepsilon_0(m)>0$ such that, for $\varepsilon\in (0,\varepsilon_0)$, the traveling wave solution $\pmb U$ is orbitally stable with asymptotic phase $s_{\infty}$ $($see \eqref{s-infty}$)$, with respect to perturbations belonging to the weighted space $D(L)$.
\item
For $m>m^c$ fixed, there exists $\varepsilon_1=\varepsilon_1(m)$ small enough such that, for $\varepsilon\in (0,\varepsilon_1)$, the traveling wave $\pmb U$ is unstable.
with respect to perturbations belonging to the weighted space $D(L)$.
\end{enumerate}
\end{theorem}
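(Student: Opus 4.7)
The plan is to translate Theorem \ref{stability theorem FNLE}, which controls the null solution of the fully nonlinear equation \eqref{FNLE}, back through the successive changes of unknown (Ansatz 1 and Ansatz 2) into conclusions about the original traveling wave $\pmb U$ and its free interface.

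For part (i), given an initial perturbation $(\Theta_0, \Phi_0)$ of $\pmb U$ with $\Theta_0(0) = \Theta_i$, so that after translation the ignition interface starts at the origin, I would set $s(0) = 0$, $\pmb u_0 = (\Theta_0, \Phi_0) - \pmb U$, and use the spectral decomposition $\pmb u_0 = p(0)\pmb U' + \pmb w_0$ with $p(0) = \langle \pmb u_0, \pmb e^*\rangle$ and $\pmb w_0 \in Q(D(L))$. Smallness of $\pmb u_0$ in $D(L)$ then yields smallness of $\pmb w_0$ in $Q(D(L))$, and Theorem \ref{stability theorem FNLE}(i) produces a global solution $\pmb w$ of \eqref{FNLE} satisfying $\|\pmb w(\tau)\|_{\bm{\mathcal W}} + \|L\pmb w(\tau)\|_{\bm{\mathcal W}} \le Ce^{-\omega\tau}\|\pmb w_0\|_{D(L)}$ for some $\omega > 0$ strictly smaller than $|\Re\lambda_1(\varepsilon)|$.

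Next, I would recover the front perturbation $s$ via the Stefan-type relation \eqref{shift}. Continuous embedding of $D(L)$ into the space of functions having well-defined one-sided boundary values at the interface yields $|(L\pmb w)_1(\tau, 0^+)| \le Ce^{-\omega\tau}$; smallness of $w_1(\tau, 0)$ and $w_1'(\tau, 0^+)$ keeps the denominator of \eqref{shift} uniformly away from zero, so $|\dot s(\tau)| \le Ce^{-\omega\tau}$. Integrating gives exponential convergence $s(\tau) \to s_\infty$, and an analogous argument applied to $\dot p = \dot s(1 + \langle \pmb u', \pmb e^*\rangle)$ obtained from \eqref{w} shows that $p(\tau)$ also converges exponentially to some $p_\infty$. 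Reconstructing via $\pmb u = p\pmb U' + \pmb w$ and returning to the original variable $x = \xi + g(\tau)$ with $g(\tau) = \tau + s(\tau)$, one finds that $(\Theta, \Phi)(\tau, \cdot)$ approaches a translate of $\pmb U$ with the prescribed asymptotic phase, which is exactly orbital stability.

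For part (ii), Theorem \ref{stability theorem FNLE}(ii) furnishes a sequence of initial data $\pmb w_{0,n} \to 0$ in $D(L)$ whose solutions $\pmb w_n$ to \eqref{FNLE} satisfy $\|\pmb w_n\|_{C([0,n];\bm{\mathcal W})} \ge \delta > 0$; reversing Ansatz 2 yields initial data $(\Theta_0^n, \Phi_0^n) \to \pmb U$ in $D(L)$ whose evolution stays at distance at least $c\delta$ from the orbit $\{\pmb U(\cdot - c) : c \in \R\}$, because $Q(\bm{\mathcal W})$ is transverse to the kernel spanned by $\pmb U'$ and minimizing over translates of $\pmb U$ can at best remove the component along $\pmb U'$. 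The main obstacle is the analytic work in part (i), namely justifying that exponential decay of $\pmb w$ in $D(L)$ transfers to exponential decay of $\dot s$: one must control both the value of $L\pmb w$ and the first-order spatial derivative of $\pmb w$ at $\xi = 0^+$ via the $D(L)$-topology, and uniformly bound the denominator of \eqref{shift} away from zero to prevent blow-up of the nonlinear transmission condition. A secondary technicality is the careful bookkeeping of the two phase-like variables $s$ and $p$ needed to identify the asymptotic phase $s_\infty$ concretely.
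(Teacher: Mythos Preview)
Your proposal is correct and follows essentially the same route as the paper: invoke Theorem \ref{stability theorem FNLE} for $\pmb w$, integrate the Stefan-type relation \eqref{shift} to get exponential convergence of $s$ to $s_\infty$, and undo the two ansatzes to return to $(\Theta,\Phi,g)$.

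The only noteworthy difference is in how you handle the scalar $p$. You plan to integrate $\dot p=\dot s(1+\langle\pmb u',\pmb e^*\rangle)$ and obtain convergence to some $p_\infty$, whereas the paper exploits the algebraic identity $p(\tau)=\Theta_i^{-1}w_1(\tau,0)$ derived in Subsection \ref{subsect-2.3}: since $\pmb w(\tau,\cdot)\to 0$ in $D(L)$ exponentially, $p(\tau)\to 0$ directly, and hence $\pmb u=p\pmb U'+\pmb w\to 0$ in the $\xi$-frame. This shortcut spares you the separate integration argument and automatically pins down $p_\infty=0$, which your approach would have to recover a posteriori from the interface constraint $u_1(\tau,0)=0$. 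Either way works, but the paper's version is shorter and makes the reconstruction formula $\pmb X=\Theta_i^{-1}w_1(\cdot,0)\pmb U'+\pmb w+\pmb U$ immediate.
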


\begin{proof}
(i) Let us fix $\pmb w_0\in Q(D(L))$ with $\|\pmb w_0\|_{D(L)}$ small enough, so that Theorem \ref{stability theorem FNLE}(i) can be applied.
Denote by $\pmb w$ the classical solution to Equation \eqref{FNLE} which satisfies the initial condition $\pmb w(0,\cdot)=\pmb w_0=(w_{0,1},w_{0,2})$.
Observe that, since $p=\Theta_i^{-1}w_1(\cdot,0)$ (see Subsection \ref{subsect-2.3}) it follows that the problem \eqref{v}, subject to the initial condition $\pmb u(0,\cdot)=\Theta_i^{-1}w_{0,1}\pmb U'+\pmb w_0$, admits a unique classical solution $(\pmb u,s)$, where $\pmb u$ decreases to zero as $\tau\to\infty$, with exponential rate. Moreover, using \eqref{shift} it is immediate to check
that $s(\tau)$ converges to
\begin{equation}
s_\infty=\int_{0}^{\infty}\frac{(L\pmb w)_1(\tau,0^+)}{\Theta_i-w_1(\tau,0)-w'_1(\tau,0^+)}d\tau,
\label{s-infty}
\end{equation}
as $\tau\to\infty$ (assuming for simplicity that $g$ vanishes at $\tau=0$).
We point out that $s_{\infty}$ depends on the initial condition.

Coming back to problem \eqref{perturbation T}-\eqref{interface-Theta-Phi} with initial condition ${\pmb X}(0)=\pmb u_0+\pmb U$ and $g(0)=0$, we easily see that the solution ${\pmb X}=(\Theta,\Phi)$ is defined by
\begin{align*}
&{\pmb X}=p\pmb U'+\pmb w+\pmb U=\Theta_i^{-1}w_1(\cdot,0)\pmb U'+\pmb w+\pmb U,\\
&g(\tau)=\tau+\int_0^{\tau}\frac{(L\pmb w)_1(\sigma,0^+)}{\Theta_i-w_1(\sigma,0)-w'_1(\sigma,0^+)}d\sigma,\qquad\;\,\tau\ge 0.
\end{align*}
From this formula and the above result, the claim follows at once.

(ii) The proof is similar to that of property (i) and, hence, it is left to the reader.
\end{proof}

\setcounter{tocdepth}{2}

\section{Hopf bifurcation}
\label{sect-5}
This section is devoted to investigating the dynamics of the perturbation of the traveling wave in a neighborhood, say $(6-\delta, 6+\delta)$, of the limit critical value $m^c=6$ (see Section \ref{stability}).  As regards parameter $m$, the situation is more complicated than in Section 4 when it was fixed. Now, the dispersion relation ${D}_{\varepsilon}(\lambda;m)$ can be seen as a sequence of analytic functions parameterized by $m$. The main difficulty here is that Hurwitz Theorem does not a priori apply, particularly because of the lack of uniformity of ${D}_{\varepsilon}(\lambda;m)$ with respect to $\varep$ and $m$. We especially find a proper approach to combining $m$ with  $\varep$:  we construct in Proposition \ref{give critical value}  a sequence of critical values $m^c(\varep)$ such that $m^c(0)=m^c$ and apply Hurwitz Theorem to the sequence $D_{\varep}(\lambda,m^c(\varep))$. This proposition will be crucial for proving the existence of a Hopf bifurcation (see Theorem \ref{Hopf bifurcation theorem}).

\subsection{Local analysis of the dispersion relation}\label{p7}
We look for the roots of the \textit{dispersion relation}, see  \eqref{dispersion epsilon},
in a neighborhood of $m^c=6$ and of $\lambda = \pm i\sqrt{3}$, for $\varep>0$ small enough. A natural idea is to turn the dispersion relation into a polynomial by squaring, however the price to pay is double: the polynomial will be of high order without algebraic solution, and
spurious roots therefore appear.

For convenience, we rewrite the equation $D_{\varepsilon}(\lambda;m)=0$ into a much more useful form. Replacing
$\sqrt{1+4\varepsilon(m+\varepsilon m^2+\lambda)}+\sqrt{1+4\varepsilon \lambda}$ by
$4\varepsilon(m+\varepsilon m^2)(\sqrt{1+4\varepsilon(m+\varepsilon m^2+\lambda)}-\sqrt{1+4\varepsilon \lambda})^{-1}$ with some straightforward algebra
we obtain the equivalent equation
\begin{equation}
\sqrt{1+4\varepsilon \lambda}-\frac{1}{1+m}\sqrt{1+4\varepsilon(m+\varepsilon m^2+\lambda)}\sqrt{1+4\lambda}+\frac{1+\varepsilon m}{1+m}\sqrt{1+4\lambda}
=\varepsilon\frac{1+4\lambda}{1+m}+1-\varepsilon.
\label{zeta}
\end{equation}

If we denote by $\zeta$ the right-hand side of \eqref{zeta} and set
\begin{align*}
\Sigma_1=&1+4\varepsilon\lambda+\frac{2+6\varepsilon m+5\varepsilon^2 m^2+4\varepsilon\lambda}{(1+m)^2}(1+4\lambda),\\[1mm]
\Sigma_2=&\frac{1+4\lambda}{(1+m)^2}\bigg [(2+6\varepsilon m+5\varepsilon^2 m^2+4\varepsilon\lambda)(1+4\varepsilon\lambda)
+\frac{[1+4\varepsilon(m+\varepsilon m^2+\lambda)](1+\varepsilon m)^2}{(1+m)^2}(1+4\lambda)\bigg ],\\[1mm]
\Sigma_3=&\frac{[1+4\varepsilon(m+\varepsilon m^2+\lambda)](1+\varepsilon m)^2}{(1+m)^4}(1+4\varepsilon\lambda)(1+4\lambda)^2.
\end{align*}
Squaring both sides of \eqref{zeta} and rearranging terms we get the equation
\begin{align}
\zeta^2-\Sigma_1=\frac{2\sqrt{1+4\lambda}}{1+m}\bigg \{&\sqrt{1+4\varepsilon \lambda}[1+\varepsilon m-\sqrt{1+4\varepsilon(m+\varepsilon m^2+\lambda)}]\notag\\
&-\frac{1+\varepsilon m}{1+m}\sqrt{1+4\lambda}\sqrt{1+4\varepsilon(m+\varepsilon m^2+\lambda)}\bigg \}.
\label{zeta-1}
\end{align}
Squaring both sides of \eqref{zeta-1} and rearranging terms gives
\begin{align}
(\zeta^2-\Sigma_1)^2-4\Sigma_2=\frac{8\sqrt{1+4\varepsilon \lambda}(1+4\lambda)}{(1+m)^2}\bigg [&
\frac{[1+4\varepsilon(m+\varepsilon m^2+\lambda)](1+\varepsilon m)}{1+m}\sqrt{1+4\lambda}\notag\\
&-\frac{(1+\varepsilon m)^2}{1+m}\sqrt{1+4\varepsilon(m+\varepsilon m^2+\lambda)}\sqrt{1+4\lambda}\notag\\
&-(1+\varepsilon m)\sqrt{1+4\varepsilon\lambda}\sqrt{1+4\varepsilon(m+\varepsilon m^2+\lambda)}\bigg ].
\label{zeta-2}
\end{align}
Finally, squaring both sides of \eqref{zeta-2} and using \eqref{zeta-1}, we conclude that
$[(\zeta^2-\Sigma_1)^2-4\Sigma_2]^2-64\Sigma_3\zeta^2=0$ or, equivalently, $P_7(\lambda;m,\varepsilon)=0$, where
$P_7(\cdot;m,\varepsilon)$ is a seventh-order polynomial (see Appendix \ref{appendix-C} for the expression of the coefficients of the polynomial).

Finding the eigenvalues of $P_7(\cdot;m,\varepsilon)$ is quite challenging. The Routh-Hurwitz criterion (see, e.g., \cite[Chapter XV]{Gantmakher98}) gives relevant information on the eigenvalues without computing them explicitly, in particular whether the eigenvalues lie in the left halfplane ${\rm Re} \lambda <0$, by computing the Hurwitz determinants $\Delta_j$ ($j=1,\ldots,6$) associated with $P_7(\lambda;m,\varepsilon)$.
Unfortunately, our double-squaring method produces spurious eigenvalues which render Routh-Hurwitz criterion inefficient.
However, Orlando's formula (see \cite[Chapter XV, 7]{Gantmakher98}), a generalization of the well-known property for the sum of the roots of a quadratic equation, establishes a relation between the leading Hurwitz determinant $\Delta_{6}$ and the sums of all different pairs of roots of $P_7(\lambda;m,\varepsilon)$. In particular, $\Delta_{6}=0$ in the case when either $0$ is a double eigenvalue (i.e., $0$ is an eigenvalue with algebraic multiplicity two) or two eigenvalues are purely imaginary and conjugate.

The following one is the main result of this subsection.

\begin{proposition}
\label{give critical value}
There exist $\varepsilon_0>0$ and $\delta>0$, and a unique function $m^c: (0,\varepsilon_0)\to (6-\delta, 6+\delta)$ with $m^c(0)=6$, such that the polynomial $\widetilde{P}_7(\lambda;\varepsilon):=P_7(\lambda; m^c(\varepsilon), \varepsilon)$ has exactly one pair of purely imaginary roots $\pm i\omega(\varepsilon)$, with $\omega(\varepsilon)>0$.
Moreover, $\omega(\varepsilon)$ converges to $\sqrt{3}$ as $\varepsilon$ tends to $0$.
\end{proposition}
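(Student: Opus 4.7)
The plan is to combine a double application of the implicit function theorem to the conjugate pair $\pm i\sqrt{3}$ of simple roots of the limit dispersion relation $D_0(\cdot;6)$ with a separate control of the spurious roots generated by the three squarings leading to $P_7$.

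First, after removing the removable singularity at $\varep=0$ in the term $\varep^{-1}[\sqrt{1+4\varep(m+\varep m^2+\lambda)}-1]$ (whose limit is $2(m+\lambda)$), the dispersion relation $D_{\varep}(\lambda;m)$ is jointly analytic in $(\lambda,m,\varep)$ in a complex neighborhood of $(i\sqrt{3},6,0)$. The factorization
$$D_0(\lambda;m)=\frac{\sqrt{1+4\lambda}-1}{4(1+m)}\big[4\lambda-(m-2)\sqrt{1+4\lambda}+m+2\big]$$
recorded in the excerpt shows that $\pm i\sqrt{3}$ are simple zeros of $D_0(\cdot;6)$: the first factor equals $(1+i\sqrt{3})/28$ at $\lambda=i\sqrt{3}$, while the second factor vanishes simply there (one can compute $\sqrt{1+4i\sqrt{3}}=2+i\sqrt{3}$). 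The implicit function theorem therefore supplies a unique analytic branch $\lambda_+(m,\varep)$ near $(6,0)$ with $\lambda_+(6,0)=i\sqrt{3}$ and $D_{\varep}(\lambda_+(m,\varep);m)=0$; by reality of the coefficients, $\lambda_-(m,\varep):=\overline{\lambda_+(m,\varep)}$ is the conjugate branch.

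At $\varep=0$ one has $\lambda_+(m,0)=a(m)+ib(m)$ with $a(m)=(m^2-6m)/8$, so that $a(6)=0$ and $a'(6)=3/4\neq 0$. A second application of the implicit function theorem to the real equation $\Re\lambda_+(m,\varep)=0$ produces a unique real-analytic function $m^c$ defined near $\varep=0$ with $m^c(0)=6$, along which $\lambda_+(m^c(\varep),\varep)=i\omega(\varep)$. Continuity of $\omega(\varep):=\Im\lambda_+(m^c(\varep),\varep)$ from $\omega(0)=\sqrt{3}$ gives $\omega(\varep)>0$ for $\varep$ small enough, so that, restricting $m^c$ to $(0,\varep_0)$, its image lies in $(6-\delta,6+\delta)$ for a suitable $\delta$. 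Since every zero of $D_{\varep}(\cdot;m^c(\varep))$ is automatically a zero of $\widetilde{P}_7(\cdot;\varep)$ (the latter is obtained from the former by three squarings), the pair $\pm i\omega(\varep)$ is a pair of purely imaginary roots of $\widetilde{P}_7$ with the required asymptotics.

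The remaining task, and the main obstacle, is the uniqueness of this pair: the three squarings introduce up to four spurious roots in addition to the roots of $D_{\varep}$, and a priori some of them could be purely imaginary. I would combine Orlando's formula with direct sign-tracking to rule this out. Orlando's formula expresses the leading Hurwitz determinant $\Delta_6(m,\varep)$ of $P_7$ as a constant multiple of $\prod_{i<j}(\lambda_i+\lambda_j)$, so that $\Delta_6(m^c(\varep),\varep)=0$ encodes precisely the relation $\lambda_+(m^c(\varep),\varep)+\lambda_-(m^c(\varep),\varep)=0$; a second nonzero purely imaginary pair would force a second factor to vanish simultaneously, which can be excluded at $\varep=0$ by checking that $\Delta_6(m,0)$ has a simple zero at $m=6$ (this reduces to $a'(6)\neq 0$). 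Equivalently, one may enumerate the four spurious roots of $P_7(\cdot;6,0)$ by systematically recording the sign choices in the intermediate equations \eqref{zeta}--\eqref{zeta-2}: each of the three sign-flipped variants of the dispersion relation can be checked algebraically, using the coefficient table of Appendix \ref{appendix-C}, to have no nonzero purely imaginary root at $m=6$. Continuity of the roots of $P_7$ in $(m,\varep)$ then extends the conclusion to small $\varep>0$ and $m$ close to $6$, whence the proposition follows.
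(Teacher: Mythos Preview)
Your construction of $m^c(\varepsilon)$ by a double application of the implicit function theorem directly to $D_\varepsilon$ is correct and is a genuinely different route from the paper's: the paper instead applies the implicit function theorem to the renormalized sixth Hurwitz determinant $\widetilde\Delta_6(m,\varepsilon)$ (where $\Delta_6=\varepsilon^2m^2C\widetilde\Delta_6$), checking that the limit $\Delta_0(m):=\widetilde\Delta_6(m,0)$ has a simple zero at $m=6$, and then uses Orlando's formula together with $a_7(m^c(\varepsilon),\varepsilon)\neq 0$ to conclude that $\widetilde P_7$ has at least one purely imaginary pair. Your approach is cleaner here because it locates the pair explicitly as the continuation of $\pm i\sqrt{3}$ and reads off the transversality from $a'(6)=3/4$; the paper's approach trades this for heavier algebra but packages everything in terms of $P_7$ alone.

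The genuine gap is in the uniqueness step. Both of your proposed arguments silently assume that the seven roots of $P_7(\cdot;m,\varepsilon)$ depend continuously on $\varepsilon$ down to $\varepsilon=0$, but from Appendix~\ref{appendix-C} one has $a_0,a_1,a_2\to 0$ as $\varepsilon\to 0$, so $\widetilde P_7(\cdot;\varepsilon)$ degenerates to the quartic $\widetilde P_4(\lambda)=-6272(4\lambda+1)(\lambda-12)(\lambda^2+3)$ and three roots escape to infinity. Consequently: (i) your claim that ``$\Delta_6(m,0)$ has a simple zero at $m=6$'' is false as stated, since $\Delta_6(m,0)\equiv 0$ because of the $\varepsilon^2$ factor, and even after renormalizing to $\widetilde\Delta_6$, a simple zero there does not by itself rule out a purely imaginary pair among the three escaping roots (Orlando's product involves those diverging roots, and the $\varepsilon^2$ renormalization absorbs precisely their contribution); (ii) your alternative of ``enumerating the four spurious roots of $P_7(\cdot;6,0)$'' cannot succeed because at $\varepsilon=0$ there are only four roots in total, so the three dangerous ones are invisible and ``continuity of the roots'' gives no control over them. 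The paper closes this gap with Lemma~\ref{technical}: an a priori bound $|\upsilon|\le \upsilon_0$ for any purely imaginary root $i\upsilon$ of $P_7(\cdot;m,\varepsilon)$, uniform for $m\in[3,7]$ and small $\varepsilon>0$, obtained by estimating $\Im P_7(i\zeta;m,\varepsilon)$ directly. With that bound in hand, any purely imaginary root stays in a fixed compact set, Hurwitz's theorem forces it toward the zeros of $\widetilde P_4$, hence to $\pm i\sqrt{3}$, and simplicity there finishes the argument. You need an ingredient of this type; without it the uniqueness claim is unproven.
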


We first need a preliminary technical lemma:
\begin{lemma}\label{technical}
There exist $\upsilon_0>0$ and $\varepsilon_*>0$ such that, for all $m$ in the interval $[3,7]$ $($to fix ideas$)$, $\varepsilon\in (0,\varepsilon_*)$ and any
purely imaginary root $i\upsilon$ of $P_7(\cdot;m,\varepsilon)$, with $\upsilon>0$, it holds that $0<\upsilon<\upsilon_0$.
\end{lemma}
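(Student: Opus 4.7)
The plan is to argue by contradiction: suppose the conclusion fails, so there exist sequences $(m_n)\subset[3,7]$, $\varepsilon_n\to 0^+$ and $\upsilon_n\to+\infty$ with $P_7(i\upsilon_n;m_n,\varepsilon_n)=0$ for every $n$. Since $P_7$ is obtained from equation \eqref{zeta} through three consecutive squarings (via \eqref{zeta-1}, \eqref{zeta-2}, and the final step), any root of $P_7(\cdot;m,\varepsilon)$ must satisfy one of the $2^3=8$ sign-variants of \eqref{zeta}, in which each of the three square roots $\sqrt{1+4\lambda}$, $\sqrt{1+4\varepsilon\lambda}$ and $\sqrt{1+4\varepsilon(m+\varepsilon m^2+\lambda)}$ may be replaced by its opposite. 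Extracting a subsequence, I may assume that all the $(i\upsilon_n)$ solve the same variant, say $\mathcal{E}_\sigma=0$ with $\sigma\in\{-1,+1\}^3$.

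I would then estimate both sides of $\mathcal{E}_\sigma$ at $\lambda=i\upsilon_n$ as $n\to\infty$. The key elementary inputs are $|\sqrt{1+4i\upsilon}|=(1+16\upsilon^2)^{1/4}\sim 2\sqrt{\upsilon}$ and $|\zeta(i\upsilon)|\le C(1+\varepsilon\upsilon)$, while $|\sqrt{1+4\varepsilon i\upsilon}|$ and $|\sqrt{1+4\varepsilon(m+\varepsilon m^2+i\upsilon)}|$ behave like $1+O(\varepsilon\upsilon)$ when $\varepsilon\upsilon$ stays bounded and like $\sqrt{2\varepsilon\upsilon}$ when $\varepsilon\upsilon\to+\infty$. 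I would then split the analysis into the two subcases $\limsup_n\varepsilon_n\upsilon_n<\infty$ and $\varepsilon_n\upsilon_n\to+\infty$.

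In the first subcase, the dominant contribution on the LHS of $\mathcal{E}_\sigma$ is either of order $\sqrt{\upsilon_n}$ (when the leading coefficient of $\sqrt{1+4i\upsilon}$ in $\mathcal{E}_\sigma$ does not vanish in the limit $\varepsilon\to 0$), giving an immediate contradiction against the bounded RHS; or of order $\varepsilon_n\upsilon_n^{3/2}$, obtained by pushing the expansion of $\sqrt{1+4\varepsilon(m+\varepsilon m^2+i\upsilon)}$ one order further, when the naive leading term cancels. In the latter ``degenerate'' cases, separating real and imaginary parts of the resulting asymptotic equation matches the imaginary parts only for $\upsilon_n$ of order $m_n^2$, which is uniformly bounded on $[3,7]$, contradicting $\upsilon_n\to+\infty$. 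In the second subcase, the dominant LHS has order $\sqrt{\varepsilon_n}\,\upsilon_n$ while the RHS has order $\varepsilon_n\upsilon_n$, so the required balance would force $\sqrt{\varepsilon_n}\asymp\varepsilon_n$, contradicting $\varepsilon_n\to 0$.

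The hard part will be the treatment of the ``degenerate'' sign-variants in the first subcase: the naive leading $\sqrt{\upsilon_n}$-contribution on the LHS cancels in the limit $\varepsilon\to 0$, and one must carry out a careful second-order asymptotic expansion together with a real/imaginary separation to bound $\upsilon_n$ from above in those subtle sub-configurations of $\sigma$. Once this case-by-case bookkeeping over the eight $\sigma$'s is done, the contradiction goes through uniformly in $(m_n,\varepsilon_n)\in[3,7]\times(0,\varepsilon_*)$ for some $\varepsilon_*>0$, yielding the desired uniform bound $\upsilon_0$.
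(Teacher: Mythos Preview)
Your approach is genuinely different from the paper's, and considerably more involved. The paper exploits the fact that $P_7(\cdot;m,\varepsilon)$ is an explicit polynomial whose coefficients $a_0,\ldots,a_7$ are listed in Appendix~\ref{appendix-C}. If $i\upsilon$ is a root, then in particular $\Im P_7(i\upsilon;m,\varepsilon)=-a_0\upsilon^7+a_2\upsilon^5-a_4\upsilon^3+a_6\upsilon=0$. A glance at the coefficients shows that the $\upsilon^3$-term is $-128(2m^4-7m^2-3m-1)\upsilon^3+O(\varepsilon)\upsilon^3$, and $2m^4-7m^2-3m-1>0$ for $m\ge 3$; the $\upsilon^7$ and $\upsilon^5$ terms have the same sign for small $\varepsilon$. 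Hence $|\Im P_7(i\zeta;m,\varepsilon)|\ge 64(2m^4-7m^2-3m-1)\zeta^3-K\zeta$ for all $m\in[3,7]$ and $\varepsilon$ small, which forces $\upsilon$ below a uniform threshold $\upsilon_0$. This is a three-line argument once the coefficients are available.

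Your route---returning to the pre-squared equation \eqref{zeta} and analyzing all eight sign-variants asymptotically---discards precisely the simplification that motivated forming $P_7$ in the first place. It may be salvageable, but two points deserve scrutiny. First, in your ``degenerate'' subcase (bounded $\varepsilon_n\upsilon_n$, leading $\sqrt{\upsilon_n}$-term cancelling), the next contribution $\varepsilon_n\upsilon_n^{3/2}$ need not dominate the $O(1)$ right-hand side: if $\varepsilon_n\upsilon_n\to 0$ fast enough this term can be small, so further subcases may be required. Second, the assertion that ``matching imaginary parts forces $\upsilon_n$ of order $m_n^2$'' is stated for all degenerate $\sigma$'s without justification; each variant has its own cancellation structure and this would need to be checked case by case. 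None of this is needed if you work directly with the polynomial.
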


\begin{proof}
We observe that, if $i\upsilon$ is a root of $P_7(\cdot;m,\varepsilon)$, then, in particular, the imaginary part of $P_7(i\upsilon;m,\varepsilon)$, i.e., the
term $-a_0\upsilon^7+a_2\upsilon^5-a_4\upsilon^3+a_6\upsilon$ vanishes.

A straightforward computation (see Appendix \ref{appendix-C}) reveals that
\begin{align*}
\Im{P_7(i\zeta;m,\varepsilon)}=&-2048(\varepsilon-1)^4\varepsilon^2\zeta^7-8\varepsilon(m^2+3m+2)\zeta^5+O(\varepsilon^2)\zeta^5\\
&-128(2m^4-7m^2-3m-1)\zeta^3+O(\varepsilon)\zeta^3+a_6\zeta,
\end{align*}
for every $\zeta>0$, where we denote by $O(\varepsilon^k)$ terms depending only on $\varepsilon$ such that
the ratio $O(\varepsilon^k)/\varepsilon^k$ stays bounded and far away from zero for $\varepsilon$ in a neighborhood of zero.
Since $m^2+3m+2$ and $2m^4-7m^2-3m-1$ are both positive for $m\in [3,\infty)$, we can estimate
\begin{align*}
|\Im{P_7(i\zeta;m,\varepsilon)}|
\ge &[8(m^2+3m+2)-O(\varepsilon)]\varepsilon\zeta^5\!+\![128(2m^4-7m^2-3m-1)-O(\varepsilon)]\zeta^3\!-\!K|\zeta|,
\end{align*}
where $K:=\max\{|a_6(m,\varepsilon)|: m\in [3,7], \varepsilon\in (0,1]\}$. Hence, we can determine $\varepsilon_*>0$ such that
\begin{align}
|\Im{P_7(i\zeta;m,\varepsilon)}|
\ge & 64(2m^4-7m^2-3m-1)\zeta^3-K|\zeta|,\qquad\;\,m\in [3,7],\;\,\varepsilon\in (0,\varepsilon_*).
\label{imaginary}
\end{align}
The right-hand side of \eqref{imaginary} diverges to $\infty$ as $\zeta\to+\infty$. From this it follows that there exists $\upsilon_0>0$ such that
$|\Im{P_7(i\zeta;m,\varepsilon)}|>0$ for every $\zeta>\upsilon_0$ and this clearly implies that $\upsilon\le\upsilon_0$.
\end{proof}

\begin{proof}[Proof of Proposition \ref{give critical value}]
We split the proof into two steps.
\vskip 1mm
{\em Step 1}. First, we prove the existence of a function $m^c$ with the properties listed in the statement of the proposition.
For this purpose, we consider the sixth-order Hurwitz determinant ${\Delta}_6(m,\varepsilon)$ associated with the polynomial $P_7(\lambda;m,\varep)$.
It turns out that
${\Delta}_6(m,\varepsilon)=\varepsilon^2m^2C\widetilde\Delta_6(m,\varepsilon)$ for some positive constant $C$. As $\varepsilon\to 0$,
$\widetilde\Delta_6(\cdot,\varepsilon)$ converges to the function ${\Delta}_0$, which is defined by
\begin{align*}
{\Delta}_0(m)=&-m^{18}+8m^{17}+97m^{16}+42m^{15}-2129m^{14}-9376m^{13}-16811m^{12}\\
&-7866m^{11}+19913m^{10}+31292m^9-4309m^8-55466m^7-66363m^6\\
&-35480m^5-4729m^4+4666m^3+2628m^2+500m+24.
\end{align*}
Noticing that ${\Delta}_0(6)=0$ and $\frac{d}{dm}{\Delta}_0(6)>0$, it then follows from the Implicit Function Theorem that there exist $\varepsilon_0\in (0,\varepsilon_*)$, with $\varepsilon_*$ given by Lemma \ref{technical}, $\delta>0$ and a unique mapping  $m^c: (0,\varepsilon_0)\to (6-\delta, 6+\delta)$ with $m^c(0)=6$, such that $\widetilde\Delta_6(m^c(\varepsilon),\varepsilon)=0$ and $\frac{\partial}{\partial m}\widetilde\Delta_6(m^c(\varepsilon),\varepsilon)>0$ for $\varepsilon\in (0,\varepsilon_0)$. Then, upon an application of Orlando formula, it follows that either $0$ is a double root of $\widetilde P_7(\lambda;\varepsilon)$ or there exists at least one pair $\pm \omega(\varepsilon)i$ (with $\omega(\varepsilon)>0$) of purely imaginary roots of  $\widetilde P_7(\lambda;\varepsilon)$ for every $\varepsilon\in (0,\varepsilon_0)$. The first case is ruled out, since 0 is not a root of $\widetilde P_7(\lambda;\varepsilon)$. Indeed, $a_7(m,\varepsilon)$ converges to a positive limit as $\varepsilon$ tends to $0$.

\vskip 1mm
{\em Step 2}.
Next, we prove that $\pm\omega(\varepsilon)i$ is the unique pair of purely imaginary roots of the polynomial $\widetilde P_7(\lambda;\varepsilon)$ for every $\varepsilon\in (0,\varepsilon_0)$. For this purpose, we begin by observing that $\widetilde{P}_7(\cdot;\varepsilon)$ converges, locally uniformly in $\mathbb C$ as $\varepsilon\to 0$, to the fourth-order polynomial $\widetilde P_4$, defined by $\widetilde{P}_4(\lambda)=-6272(4\lambda+1)(\lambda-12)(\lambda^2+3)$ for every $\lambda\in\mathbb C$.
By Hurwitz Theorem, four roots of $\widetilde{P}_7(\lambda; \varep)$, say  $\lambda_1(\varep)$, $\lambda_2(\varep)$, $\lambda_3(\varep)$ and $\lambda_4(\varep)$ converge respectively to  $\lambda_1(0)=-\frac{1}{4}, \lambda_2(0)=12, \lambda_3(0)= \sqrt{3}i$ and $\lambda_4(0)=-\sqrt{3}i$. More precisely, for $r_1>0$ small enough, $\lambda_i(\varepsilon)$ ($i=1,\ldots,4$) is simple in the ball $B(\lambda_i(0),r_1)$ for $\varep\in (0,\varepsilon_0)$ (up to replacing $\varepsilon_0$ with a smaller value if needed).
Assume by contradiction  that there exists a positive infinitesimal sequence $\{\varepsilon_n\}$ such that, for any $n\in\N$, ($\lambda_{5}(\varepsilon_n),\lambda_6(\varepsilon_n)$) is another pair of purely imaginary and conjugate roots of $\widetilde P_7(\lambda; \varepsilon_n)$, different from $\pm\omega(\varepsilon_n)i$. By
Lemma \ref{technical}, $\nu(\varep_n)=|\lambda_5(\varepsilon_n)|\leq \upsilon_0$ for every $n\in\N$. Take a subsequence $\{\varep_{n_k}\}$ such that $\nu({\varep}_{n_k})$ converges as $k \to \infty$. The local uniform convergence in $\mathbb C$ of
$\widetilde P_7(\cdot;\varepsilon_n)$ to $\widetilde P_4$ implies that $\nu({\varep}_{n_k})$ tends to $\sqrt{3}$ as $k\to\infty$. Since the limit is independent of the choice of subsequence $\{\varep_{n_k}\}$, we conclude that $\nu(\varep_n)$ converges to $\sqrt{3}$ as $n\to\infty$.
Next, thanks to Hurwitz Theorem and the fact that $\lambda_3(\varep)$, $\lambda_4(\varep)$ converge to $\sqrt{3}i, -\sqrt{3}i$ respectively, the pair
($\lambda_5(\varepsilon_{n_k}),\lambda_6(\varepsilon_{n_k})$) coincides with ($\lambda_3(\varepsilon_{n_k}),\lambda_4(\varepsilon_{n_k})$) in $B(\sqrt{3}i,r_1)\times B(-\sqrt{3}i,r_1)$. This contradicts the fact that $\lambda_3(\varepsilon_{n_k}),\lambda_4(\varepsilon_{n_k})$ are both simple. Up to
replacing $\varepsilon_0$ with a smaller value if needed, we have proved that $(\omega(\varepsilon)i,-\omega(\varepsilon)i)$ is the unique pair of conjugate eigenvalues of
$\widetilde P_7(\cdot;\varepsilon)$ and  $\lambda_3(\varepsilon)=\omega(\varepsilon)i$ for every $\varepsilon\in (0,\varepsilon_0)$. The proof is now complete.
\end{proof}

\setcounter{tocdepth}{2}
\subsection{Hopf bifurcation theorem}
\label{subsect-5.2}
For fixed  $0<\varepsilon<\varepsilon_0$, $\varep_0$ and $\delta$ given by Proposition \ref{give critical value}, let us consider the fully nonlinear problem \eqref{FNLE},
where now we find it convenient to write $F(\pmb w;m)$ instead of $F(\pmb w)$ to make much more explicit the dependence of the nonlinear term $F$ on the bifurcation parameter $m$.
According to Proposition \ref{give critical value}, the bifurcation parameter $m$ has a critical value $m^c(\varep) \in (6-\delta,6+\delta)$. We intend to prove that a Hopf bifurcation occurs at $m=m^c(\varep)$ if $\varep$ is small enough. For $m$ close to $m^c(\varep)$, we are going to locally parameterize $m$ and $\pmb w$ by a parameter $\sigma \in (-\sigma_0,\sigma_0)$. To emphasize this dependence, we will write $\widetilde{m}(\sigma)$ and $\widetilde{\pmb w}(\cdot,\cdot;\sigma)$.

\begin{theorem}
\label{Hopf bifurcation theorem} For any fixed $\alpha\in (0,1)$, there exists $\tilde{\varep}_0\in (0,\varep_0)$, such that whenever $\varep\in (0,\tilde{\varep}_0)$ is fixed, the following properties are satisfied.
\begin{enumerate}[\rm (i)]
\item
There exist $\sigma_0>0$ and smooth functions $\widetilde{m}$, $\rho:(-\sigma_0,\sigma_0)\to\mathbb{R}$, $\widetilde{{\pmb w}}:(-\sigma_0,\sigma_0)\to C^{1+\alpha}(\R;\pmb{\mathcal W})\cap C^{\alpha}(\R;Q(D(L)))$, satisfying the conditions
$\widetilde{m}(0)=m^c$, $\rho(0)=1$ and $\widetilde{\pmb w}(\cdot,\cdot;0)$ $=0$.
In addition, $\widetilde{\pmb w}(\cdot,\cdot;\sigma)$ is not a constant if $\sigma\neq 0$, and $\widetilde{\pmb w}(\cdot,\cdot;\sigma)$ is a
$T(\sigma)$-periodic solution of the equation
\begin{eqnarray*}
\widetilde{\pmb w}_\tau(\cdot,\cdot;\sigma) = QL\widetilde{\pmb w}(\cdot,\cdot;\sigma) + F(\widetilde{\pmb w}(\cdot,\cdot;\sigma);\widetilde{m}(\sigma)), \qquad\;\, \tau \in \R,
\end{eqnarray*}
where $T(\sigma)=2\pi\rho(\sigma)\omega^{-1}$ and $\omega=\omega(\varepsilon)$ is defined in Proposition $\ref{give critical value}$.
\item
There exists $\eta_0$ such that if $\overline{m} \in (6-\delta_0, 6+\delta_0)$, $\bar{\rho}\in\R$ and $\pmb w \in C^{1+\alpha}(\mathbb{R};\pmb{\mathcal W})\cap C^{\alpha}(\mathbb{R};Q(D(L)))$ is a $2\pi\bar{\rho}\omega^{-1}$-periodic solution of the equation
$\overline{\pmb w}_\tau = QL\overline{\pmb w} + F(\overline{\pmb w};\overline{m})$ such that
\begin{equation*}
\|\overline{\pmb w}\|_{ C^{1+\alpha}(\R;\pmb{\mathcal W})}+\|\overline{\pmb w}\|_{C^{\alpha}(\R;Q(D(L)))}+|\bar{m}|+|1-\bar{\rho}|\leq\eta_0,
\end{equation*}
then there exist $\sigma\in(-\sigma_0,\sigma_0)$ and $\tau_0\in \R$ such that
$\overline{m}=\widetilde{m}(\sigma)$, $\bar\rho=\rho(\sigma)$ and $\overline{\pmb w}=\widetilde{\pmb w}(\cdot+\tau_0,\cdot;\sigma)$.    	
\end{enumerate}
\end{theorem}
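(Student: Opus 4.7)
The plan is to apply the Hopf bifurcation theorem for fully nonlinear parabolic equations developed in \cite[Chapter 9]{Lunardi96} to the equation $\partial_\tau\pmb w=QL\pmb w+F(\pmb w;m)$. The natural first step is to rescale time by $\tau=\rho s$ to turn the \emph{a priori} unknown period $T(\sigma)$ into a fixed-period $T_0:=2\pi/\omega(\varep)$ problem, with $\rho$ a new scalar parameter near $1$. A solution then corresponds to a $T_0$-periodic zero of
\begin{equation*}
G(\pmb w,\rho,m):=\rho^{-1}\partial_s\pmb w-QL\pmb w-F(\pmb w;m)
\end{equation*}
in a neighborhood of $(0,1,m^c(\varep))$, viewed as a map from $T_0$-periodic functions in $C^{1+\alpha}(\R;\bm{\mathcal W})\cap C^{\alpha}(\R;Q(D(L)))$ to the space of $T_0$-periodic functions in $C^{\alpha}(\R;Q(\bm{\mathcal W}))$.

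I would then verify the four standing hypotheses of the Hopf theorem. (H1) Sectoriality of $QL$ on $Q(\bm{\mathcal W})$ is immediate from Theorem \ref{thm-2.3} since $Q$ commutes with $L$. (H2) The only elements of $\sigma(QL)\cap i\R$ at $m=m^c(\varep)$ are the simple eigenvalues $\pm i\omega(\varep)$; simplicity traces back to the simplicity of $\lambda_3(\varep)=i\omega(\varep)$ as a root of $\widetilde P_7(\cdot;\varep)$ in Proposition \ref{give critical value}. (H3) The non-resonance condition $ik\omega(\varep)\notin\sigma(QL)$ for $k\neq\pm 1$: for $|k|\ge 2$ this follows from Proposition \ref{give critical value} combined with the fact that the parabola $\mathcal P$ is uniformly bounded away from the imaginary axis (the coefficient $a=(1-\varep)^2$ in Theorem \ref{thm-2.3} is bounded below for small $\varep$); for $k=0$ it is ensured by the very projection $Q$. (H4) Transversality $\partial_m\Re\lambda_3(m^c(\varep);\varep)\neq 0$, which I would establish by an $\varep\to 0$ comparison: the explicit formula $\lambda_3(m;0)=\tfrac{1}{8}(m^2-6m)+i\tfrac{1}{8}(m-2)\sqrt{8m-m^2}$ from Section \ref{stability} yields $\partial_m\Re\lambda_3(6;0)=3/4$, and the Implicit Function Theorem applied to $D_\varep(\lambda;m)=0$ (using that $\partial_\lambda D_\varep$ is nonzero at a simple root) gives smooth joint dependence of $\lambda_3$ on $(m,\varep)$, so the transversality derivative stays close to $3/4$ for $\varep$ small.

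With the hypotheses in hand, the kernel of $\partial_{\pmb w}G(0,1,m^c(\varep))$ on $T_0$-periodic functions is two-dimensional, spanned by $\Re(e^{i\omega s}\phi_\varep)$ and $\Im(e^{i\omega s}\phi_\varep)$, where $\phi_\varep$ is the eigenfunction of $QL$ at $i\omega(\varep)$, and the range has codimension two (by Fredholm theory for $\rho^{-1}\partial_s-QL$ on periodic Hölder spaces). A Lyapunov--Schmidt reduction in this two-dimensional kernel, parameterized by the amplitude $\sigma$ of the kernel component, collapses the bifurcation equation to a system in the two unknowns $(\widetilde m(\sigma),\rho(\sigma))$; the $S^1$-action by time translation reduces the effective equation further, and its Jacobian at $\sigma=0$ is nondegenerate thanks to (H4). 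The Implicit Function Theorem then produces the smooth branch $\sigma\mapsto(\widetilde m(\sigma),\rho(\sigma),\widetilde{\pmb w}(\cdot,\cdot;\sigma))$ with $(\widetilde m(0),\rho(0),\widetilde{\pmb w}(\cdot,\cdot;0))=(m^c(\varep),1,0)$, giving (i). Statement (ii) is the companion local uniqueness output of the same reduction: any $T$-periodic solution near $(0,m^c(\varep))$ with $T$ near $T_0$, after a suitable time shift $\tau_0$ to normalize the phase of its kernel projection, must coincide with a point of the constructed branch.

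The main obstacle will be the full nonlinearity of $F$: from \eqref{shift}, $F(\pmb w;m)$ depends on $\pmb w$ through the traces $w_1(\cdot,0)$, $\partial_\xi w_1(\cdot,0^+)$, and crucially $(L\pmb w)_1(\cdot,0^+)$, so $F$ is not a bounded map from $\bm{\mathcal W}$ to $\bm{\mathcal W}$ and its smoothness must be checked in the stronger topology of $D(L)$. This is exactly the fully nonlinear Hölder framework of \cite[Chapter 9]{Lunardi96}: one verifies that $F$ is smooth from a neighborhood of $0$ in $Q(D(L))$ to $Q(\bm{\mathcal W})$ because the denominator $\Theta_i-w_1(\cdot,0)-\partial_\xi w_1(\cdot,0^+)$ stays away from zero for small $\pmb w$, making the quotient an analytic function of its arguments, while the trace maps are bounded linear functionals on $D(L)$. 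A secondary technical point is uniformity of (H3) in $\varep$, which is handled by combining Proposition \ref{give critical value} (finiteness and location of the roots of the dispersion relation) with Hurwitz's Theorem applied to $\widetilde P_7(\cdot;\varep)\to\widetilde P_4(\lambda)=-6272(4\lambda+1)(\lambda-12)(\lambda^2+3)$, whose nonzero roots are $-1/4$, $12$ and $\pm i\sqrt 3$, none of which lies at $\pm ik\sqrt 3$ for $|k|\ge 2$.
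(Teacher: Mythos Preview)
Your overall plan matches the paper's: verify the spectral hypotheses for the abstract Hopf theorem in \cite[Chapter~9]{Lunardi96} and then invoke that theorem. Hypotheses (H1), (H3), (H4) are handled essentially as in the paper (the paper also computes $\partial_\lambda D_0(\sqrt{3}i,6)$ and $\partial_m\lambda$ at $(6,0)$ to push transversality through for small~$\varepsilon$). However, your treatment of (H2) has a genuine gap.

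You write that ``simplicity traces back to the simplicity of $\lambda_3(\varep)=i\omega(\varep)$ as a root of $\widetilde P_7(\cdot;\varep)$ in Proposition~\ref{give critical value}.'' But $\widetilde P_7$ was obtained from the dispersion relation $\widetilde D_\varepsilon(\cdot):=D_\varepsilon(\cdot;m^c(\varepsilon))$ by \emph{squaring three times}, so it carries spurious roots that are not zeros of $\widetilde D_\varepsilon$ at all (the paper says this explicitly in Subsection~\ref{p7}). Thus, knowing that $i\omega(\varepsilon)$ is a simple root of $\widetilde P_7$ neither shows that $i\omega(\varepsilon)$ is a root of $\widetilde D_\varepsilon$ nor that it is an eigenvalue of $L$. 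The paper closes this gap by applying Hurwitz's theorem directly to the analytic family $\widetilde D_\varepsilon\to\widetilde D_0$ to produce a root $\lambda(\varepsilon)$ of $\widetilde D_\varepsilon$ near $\sqrt{3}i$; since any root of $\widetilde D_\varepsilon$ is automatically a root of $\widetilde P_7$, and Proposition~\ref{give critical value} says $\widetilde P_7$ has a unique root in a small ball around $\sqrt{3}i$, one concludes $\lambda(\varepsilon)=i\omega(\varepsilon)$. You need this two-way bridge between $\widetilde D_\varepsilon$ and $\widetilde P_7$.

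A second, smaller gap in (H2): even once you know $i\omega(\varepsilon)$ is a simple zero of $\widetilde D_\varepsilon$, this gives a simple pole of the resolvent but does not by itself give algebraic simplicity of the eigenvalue of $L$; one must also check that the geometric multiplicity is one. The paper does this by observing that, at $\lambda=\pm i\omega(\varepsilon)$, the $4\times 4$ matrix in \eqref{matrix} has rank three, so the eigenspace is one-dimensional. You should include this check (or an equivalent argument) before invoking the Hopf theorem.
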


\begin{proof}
We split the proof into two steps.
\vskip 1mm
\textsl{Step 1.} Here, we prove that there exists $\varepsilon_1>0$ such that $\pm \omega(\varepsilon)i$ are simple eigenvalues of $L$ (and, hence,
of the part of $L$ in $\pmb {\mathcal W}_Q=Q(\pmb {\mathcal W})$) for every $\varepsilon\in (0,\varepsilon_1]$ and there are no other eigenvalues on the imaginary axis, i.e., we prove that this operator satisfies the so-called resonance condition.

To begin with, let us prove that $\pm\omega(\varepsilon)i$ are eigenvalues of $L$. In view of Theorem \ref{thm-2.3}, we need to show that they are roots of the dispersion relation \eqref{dispersion epsilon}. For this purpose, we observe that the function $\widetilde{D}_\varepsilon:= D_\varepsilon(\cdot;m^c(\varepsilon))$ converges to  $\widetilde{D}_0$ locally uniformly in the strip $\{\lambda\in\mathbb{C}:|\Re\lambda|\le \ell\}$ (for $\ell$ small enough),
where
\begin{eqnarray*}
\displaystyle\widetilde D_0(\lambda)=-\lambda-\frac{1+\sqrt{1+4\lambda}}{2}+\frac{1}{14}[(13+2\lambda)\sqrt{1+4\lambda}+1+4\lambda],\qquad\;\,\lambda\in\mathbb C.
\end{eqnarray*}
The function $\widetilde D_0$ has just one pair of purely imaginary conjugate roots $\pm\sqrt{3}i$. Hurwitz theorem shows that there exists $r>0$ such that the ball $B(\sqrt{3}i,r)$ contains exactly one root $\lambda(\varepsilon)$ of $\widetilde D_{\varep}$ for each $\varepsilon$ small enough.
By the proof of Proposition \ref{give critical value}, we know that there exists $r_1>0$ such
that $\omega(\varepsilon)i$ is the unique root of $\widetilde P_7$ in the ball $B(\sqrt{3}i,r_1)$. Clearly, $\lambda(\varepsilon)$ is a root of the polynomial $\widetilde P_7$ and, Hurwitz theorem also shows that $\lambda(\varepsilon)$ converges to $\sqrt{3}i$ as $\varepsilon\to 0^+$. Therefore, for $\varepsilon$ small enough, both
$\lambda(\varepsilon)$ and $\omega(\varepsilon)i$ belong to $B(\sqrt{3}i,r_1)$ and, hence, they do coincide.
The same argument shows that $-\omega(\varepsilon)i$ is also a root of $\widetilde D_\varepsilon$. We have proved that there exists $\varepsilon_1\le\varepsilon_0$ such that  $\omega(\varepsilon)i$ and $-\omega(\varepsilon)i$ are both eigenvalues of $L$ of every $\varepsilon\in (0,\varepsilon_1]$. In particular, $\pm\omega(\varepsilon)i$ are simple roots of
the function $\widetilde D_{\varepsilon}$ and there are no other eigenvalues of $L$ on the imaginary axis.

To conclude that $\pm\omega(\varepsilon)i$ are simple eigenvalues of $L$ for each $\varepsilon\in (0,\varepsilon_1]$,
we just need to check that their geometric multiplicity is one. For this purpose, we observe that the proof of Theorem \ref{thm-2.3} shows that the eigenfunctions associated with the eigenvalues $\pm\omega(\varepsilon)i$ are given by
\begin{eqnarray*}
\begin{array}{lll}
\displaystyle u(\xi)=c_1e^{k_1\xi}+\frac{A}{H_{1,\lambda}}\bigg (\frac{e^{k_3\xi}}{k_3-k_2}-\frac{e^{k_3\xi}-e^{k_1\xi}}{k_3-k_1}\bigg )c_3,\quad &v(\xi)=c_3e^{k_3\xi}, &\xi<0,\\[3mm]
u(\xi)=c_6e^{k_2\xi}, &v(\xi)=c_8e^{k_6\xi}, &\xi\ge 0
\end{array}
\end{eqnarray*}
with $k_j=k_{j,\pm\omega(\varepsilon)i}$ and the constants $c_1$, $c_3$, $c_6$ and $c_8$ are determined through the equation \eqref{matrix} (with $\lambda=\pm\omega(\varepsilon)i$) where $F_1=\ldots=F_4=0$.
Since the rank of the matrix in \eqref{matrix} is three at $\lambda=\pm\omega(\varepsilon)i$, it follows at once that the geometric multiplicity of
$\pm\omega(\varepsilon)i$ is one.

\vskip 1mm
\textsl{Step 2:} Now, we check the nontransversality condition. We begin by observing that, for every $\varepsilon\in (0,\varepsilon_1]$, the function $D_{\varepsilon}$
is analytic with respect to $\lambda$ and continuously differentiable with respect to $m$ in $B(\sqrt{3}i,r)\times (6-\delta,6+\delta)$, where $r$ is such that
the ball $B(\sqrt{3}i,r)$ does not intersect the half line $(-\infty,-1/4]$.
We intend to apply the Implicit Function Theorem at $(\omega(\varep)i,m^c(\varep))$ for $\varep$ small enough.
In this respect, we need to show that the $\lambda$-partial derivative of $D_{\varepsilon}$ does not vanish at $(\lambda_3(\varep),m^c(\varep))$.
To this aim, we observe that
\begin{eqnarray*}
\lim_{\varepsilon\to 0^+}\frac{\partial D_{\varepsilon}}{\partial\lambda}(\omega(\varepsilon)i,m^c(\varepsilon))=\frac{\partial D_0}{\partial\lambda}(\sqrt{3}i,6)=\frac{5\sqrt{3}i-3}{49}.
\end{eqnarray*}
Therefore, there exists $\varepsilon_2\leq\varepsilon_1$ such that, if $\varepsilon\in (0,\varepsilon_2]$, the $\lambda$-partial derivative of $D_{\varepsilon}$ at $(\omega(\varepsilon)i,m^c(\varepsilon))$ does not vanish. Then, it follows from the Implicit Function Theorem that
for each $\varepsilon\in (0,\varepsilon_2]$, there exist $\delta_{\varepsilon}>0$, $r_{\varepsilon}<r$ and
a $C^1$-mapping $\lambda_{\varepsilon}:(m^c(\varepsilon)-\delta_{\varepsilon},m^c(\varepsilon)+\delta_{\varepsilon})\to B(\sqrt{3}i,r_{\varepsilon})$, such that $D_{\varepsilon}(\lambda_{\varep}(m),m)=0$ for all $m\in(m^c(\varepsilon)-\delta_{\varepsilon},m^c(\varepsilon)+\delta_{\varepsilon})$
and $\lambda_{\varepsilon}(6)=\omega(\varepsilon)i$.

As a consequence, there are two branches of conjugate isolated and simple eigenvalues, $\lambda_{\varep}(m)$ and $\overline{\lambda}_{\varep}(m)$, which cross the imaginary axis respectively at $\pm\omega(\varepsilon)i$ for $m=m^c(\varep)$.

It remains to determine the sign of the real part of the derivative of $\lambda_{\varep}$ at $m=m^c(\varep)$. Since
\begin{eqnarray*}
\lim_{\varepsilon\to 0^+}\frac{\partial\lambda_{\varepsilon}}{\partial m}(m^c(\varepsilon))
=-\bigg (\frac{\partial D_0}{\partial m}(\sqrt{3}i,6)\bigg )\bigg (\frac{\partial D_0}{\partial\lambda}(\sqrt{3}i,6)\bigg )^{-1}=\frac{3}{4}+\frac{\sqrt{3}}{12}i
\end{eqnarray*}
there exists $\varepsilon_3\le\varepsilon_2$ such that the real part of the derivative of $\lambda_{\varepsilon}$ is positive at $m^c(\varepsilon)$ for
any $\varepsilon\in (0,\varepsilon_3]$. which completes the proof of Step 2.

Applying \cite[Theorem 9.3.3]{Lunardi96}, the claims follow with $\tilde\varepsilon_0=\varepsilon_3$.
\end{proof}

\setcounter{tocdepth}{2}
\subsection{Bifurcation from the traveling wave}
As in Subsection \ref{stability theorem TW}, we rewrite the results in Theorem \ref{Hopf bifurcation theorem} in terms of problem \eqref{perturbation T}-\eqref{interface-Theta-Phi}. As above, $\varep$ is fixed in $(0,\tilde{\varep}_0)$; therefore, the traveling wave $\pmb U$ depends only on $m$, which itself is parameterized by $\sigma \in (-\sigma_0,\sigma_0)$. Accordingly, the traveling wave reads $\widetilde{\pmb U}(.;\sigma)$.

The following theorem expresses that there exists a bifurcated branch bifurcating from the traveling wave at the bifurcation point $m^c(\varep)$.
The proof can be obtained arguing as in the proof of Theorem \ref{stability theorem TW}. Hence, the details are skipped.

\begin{theorem}
For each $\sigma \in (-\sigma_0,\sigma_0)$,
the problem \eqref{perturbation T}-\eqref{interface-Theta-Phi} admit a non trivial solution $(\widetilde{{\pmb X}}(\cdot,\cdot;\sigma),\widetilde g(\cdot;\sigma))$ defined by:
\begin{align*}
&\widetilde{{\pmb X}}(\cdot,\cdot;\sigma)=\Theta_i^{-1}\widetilde w_1(\cdot,0;\sigma) {\widetilde{\pmb U}}'(\cdot;\sigma)+\widetilde{{\pmb w}}(\cdot,\cdot;\sigma)+ \widetilde{\pmb U}(\cdot;\sigma),\\
&\widetilde g(\tau;\sigma)=\tau+ \frac{\tau}{T(\sigma)}\int_0^{T(\sigma)}\frac{(L\widetilde{\pmb w}(r,\cdot;\sigma))_1(\sigma,0^+)}{\Theta_i-\widetilde w_1(r,0;\sigma)-\widetilde w'_1(r,0^+;\sigma)}dr + \widetilde{h}(\tau;\sigma),\quad\;\,\tau\in\mathbb R.
\end{align*}
where $\widetilde{{\pmb X}}(\cdot,\cdot;0)=\widetilde{\pmb U}(.;0)$, $\widetilde{\pmb w}$ is defined by Theorem $\ref{Hopf bifurcation theorem}$.
The function $\widetilde h(\cdot;\sigma)$ belongs to $C^{1+\alpha}(\mathbb R)$. Moreover, $\widetilde{\pmb X}(\cdot,\cdot;\sigma)$ and $\widetilde h(\cdot;\sigma)$ are periodic with period $T(\sigma)=2\pi\rho(\sigma)\omega^{-1}$. At the bifurcation point, the ``virtual period'' is $T(0)=2\pi\omega^{-1}$.
\end{theorem}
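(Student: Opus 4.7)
The proof follows the pattern of Theorem \ref{stability theorem TW}, but now carried out around the periodic orbits produced by Theorem \ref{Hopf bifurcation theorem} instead of decaying orbits around the equilibrium. My plan is to reverse ansatz 2, reconstruct the perturbation $\pmb u$ and the front position $g$ from the periodic reduced solution $\widetilde{\pmb w}(\cdot,\cdot;\sigma)$, and then check that the resulting pair solves the original free interface problem.

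First, I fix $\sigma\in(-\sigma_0,\sigma_0)$ and take $\widetilde{\pmb w}(\cdot,\cdot;\sigma)$, the $T(\sigma)$-periodic solution to $\pmb w_\tau=QL\pmb w+F(\pmb w;\widetilde m(\sigma))$ supplied by Theorem \ref{Hopf bifurcation theorem}, with corresponding traveling wave $\widetilde{\pmb U}(\cdot;\sigma)$ associated with the bifurcation value $\widetilde m(\sigma)$. Following Subsection \ref{subsect-2.3}, I set
\begin{equation*}
p(\tau;\sigma):=\Theta_i^{-1}\widetilde w_1(\tau,0;\sigma),\qquad \pmb u(\tau,\xi;\sigma):=p(\tau;\sigma)\widetilde{\pmb U}'(\xi;\sigma)+\widetilde{\pmb w}(\tau,\xi;\sigma),
\end{equation*}
so that $\widetilde{\pmb X}(\cdot,\cdot;\sigma)=\widetilde{\pmb U}(\cdot;\sigma)+\pmb u(\cdot,\cdot;\sigma)$ is the natural candidate for the perturbed state. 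Since $\widetilde{\pmb w}(\tau,\cdot;\sigma)\in Q(D(L))$ the continuity and transmission relations of Subsection \ref{subsect-2.2} are automatically encoded by this ansatz, and the condition $u_1(\tau,0)=0$ reduces to the definition of $p$.

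Next I define $\dot s(\tau;\sigma)$ directly by the second-order Stefan condition \eqref{shift}, that is
\begin{equation*}
\dot s(\tau;\sigma)=\frac{(L\widetilde{\pmb w}(\tau,\cdot;\sigma))_1(0^+)}{\Theta_i-\widetilde w_1(\tau,0;\sigma)-\widetilde w_1'(\tau,0^+;\sigma)}.
\end{equation*}
For $\sigma$ small the denominator is bounded away from $0$ by continuity in $\sigma$ and the fact that $\widetilde{\pmb w}(\cdot,\cdot;0)\equiv 0$. The regularity $\widetilde{\pmb w}(\cdot,\cdot;\sigma)\in C^{1+\alpha}(\R;\bm{\mathcal W})\cap C^{\alpha}(\R;Q(D(L)))$ and the continuity of the trace operators $\pmb w\mapsto w_1(0)$, $\pmb w\mapsto w_1'(0^+)$, $\pmb w\mapsto (L\pmb w)_1(0^+)$ on $D(L)$ then imply that $\dot s(\cdot;\sigma)\in C^{\alpha}(\R)$; moreover, $\dot s(\cdot;\sigma)$ is $T(\sigma)$-periodic because $\widetilde{\pmb w}(\cdot,\cdot;\sigma)$ is.

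Now comes the key observation that yields the precise form of $\widetilde g$: the primitive of a $T(\sigma)$-periodic continuous function decomposes uniquely as a linear-in-$\tau$ secular term (whose slope is the period-mean) plus a $T(\sigma)$-periodic remainder. Setting
\begin{equation*}
\bar c(\sigma):=\frac{1}{T(\sigma)}\int_0^{T(\sigma)}\frac{(L\widetilde{\pmb w}(r,\cdot;\sigma))_1(0^+)}{\Theta_i-\widetilde w_1(r,0;\sigma)-\widetilde w_1'(r,0^+;\sigma)}dr,\qquad \widetilde h(\tau;\sigma):=\int_0^\tau\!\big(\dot s(r;\sigma)-\bar c(\sigma)\big)dr,
\end{equation*}
I obtain $s(\tau;\sigma)=\bar c(\sigma)\tau+\widetilde h(\tau;\sigma)$ with $\widetilde h(\cdot;\sigma)\in C^{1+\alpha}(\R)$ and $T(\sigma)$-periodic. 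Consequently $\widetilde g(\tau;\sigma)=\tau+s(\tau;\sigma)$ matches the formula in the statement.

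It remains to verify that $(\widetilde{\pmb X},\widetilde g)$ really solves \eqref{perturbation T}-\eqref{interface-Theta-Phi}. This is a direct computation: the equation $\widetilde{\pmb w}_\tau=QL\widetilde{\pmb w}+F(\widetilde{\pmb w};\widetilde m(\sigma))$ together with $p=\Theta_i^{-1}\widetilde w_1(\cdot,0)$ was derived in Subsection \ref{subsect-2.3} as exactly the projected form of the equation \eqref{v} obeyed by $\pmb u$, once $\dot s$ is eliminated via \eqref{shift}; reading Subsection \ref{subsect-2.3} backwards shows that $\pmb u$ and $s$ as constructed satisfy \eqref{v} and \eqref{interface-u}, hence $\widetilde{\pmb X}=\widetilde{\pmb U}+\pmb u$ and $\widetilde g=\tau+s$ solve \eqref{perturbation T}-\eqref{interface-Theta-Phi}. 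Periodicity and nontriviality of $\widetilde{\pmb X}(\cdot,\cdot;\sigma)$ for $\sigma\neq 0$ are inherited from the corresponding properties of $\widetilde{\pmb w}(\cdot,\cdot;\sigma)$ in Theorem \ref{Hopf bifurcation theorem}, while at $\sigma=0$ we recover $\widetilde{\pmb X}(\cdot,\cdot;0)=\widetilde{\pmb U}(\cdot;0)$. The only genuinely nontrivial point in the argument is the decomposition of $s$ into a secular plus a periodic part; the rest is bookkeeping based on the earlier reduction and on the regularity statement in Theorem \ref{Hopf bifurcation theorem}.
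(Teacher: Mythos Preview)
Your proposal is correct and follows precisely the approach the paper indicates: the paper itself omits the details and simply refers back to the proof of Theorem \ref{stability theorem TW}, and you have faithfully carried out that reconstruction in the periodic setting. The one point you add beyond Theorem \ref{stability theorem TW}---the decomposition of the primitive of the $T(\sigma)$-periodic $\dot s$ into a secular drift $\bar c(\sigma)\tau$ plus a periodic $\widetilde h(\tau;\sigma)$---is exactly what is needed to obtain the stated form of $\widetilde g$, and your verification that the ansatz-2 inversion reproduces the interface conditions \eqref{interface-u} is the right bookkeeping.
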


We refer to, e.g., \cite{NR97, Lorenzi04} for solutions which are periodic modulo a linear growth.

\section*{Acknowledgments} L.L. greatly acknowledges the School of Mathematical Sciences of the University of Science and Technology of China for the warm hospitality during his visit. M.M.Z. would like to thank the Department of Mathematical, Physical and Computer Sciences of the University of Parma for the warm hospitality during her visit. The authors wish to thank Peter Gordon, Congwen Liu and Gregory I. Sivashinsky for fruitful discussions.

\appendix

\section{General solution to the equation $\lambda \pmb u-{\mathcal L}\pmb u=\pmb f$}
\label{appendix-A}

Here, we collect the expression of the more general classical solution to the equation
$\lambda \pmb u-{\mathcal L}\pmb u=\pmb f$ when $\pmb f=(f_1,f_2)$ is a continuous function and $\lambda\in\C$.
We preliminarily note that, since $\Le>1$, the equation $k_{1,\lambda}=k_{4,\lambda}$ has no complex solutions $\lambda$.
The equation $k_{1,\lambda}=k_{3,\lambda}$ admits two complex conjugate solutions
\begin{equation}
\lambda_j^*=\frac{-A\Le+(-1)^ji\sqrt{A\Le(\Le-1)}}{\Le-1},\qquad\;\,j=1,2,
\label{lambda-k1-k3}
\end{equation}
whose real part is negative. Moreover, the equation $k_{2,\lambda}=k_{4,\lambda}$ admits no complex solutions.
Also the equation $k_{1,\lambda}=k_{4,\lambda}$ admits no solutions. Indeed, squaring twice the
equation $H_{1,\lambda}+H_{2,\lambda}=\Le-1$ we get $\lambda_1^*$ and $\lambda_2^*$ as solutions, which would imply that $k_{1,\lambda}=k_{2,\lambda}$. Obviously, this can not be the case.

Setting $\pmb u=(u,v)$, it turns out that, for any $\pmb f=(f_1,f_2)\in \bm{\mathcal W}$ and $\lambda\neq\{\lambda_1^*,\lambda_2^*\}$,
the general classical solution to the equation $\lambda \pmb u-{\mathcal L}\pmb u=\pmb f$ is given by
\begin{align}
u(\xi)=&\bigg (\!c_1\!-\!{\frac{1}{H_{1,\lambda}}\!\int_{0}^{\xi}\!(Av(s)\!+\!f_1(s))e^{-k_{1,\lambda}\!s}ds}\bigg )e^{k_{1,\lambda}\xi}\!+\!\bigg (\!c_2\!+\!{\frac{1}{H_{1,\lambda}}\!\int_{0}^{\xi}\!(Av(s)\!+\!f_1(s))e^{-k_{2,\lambda}\!s}ds}\bigg )e^{k_{2,\lambda}\xi}\notag\\
=&\bigg\{c_1  -\frac{A}{H_{1,\lambda}}\bigg [\frac{e^{(k_{3,\lambda}-k_{1,\lambda})\xi}-1}{k_{3,\lambda}-k_{1,\lambda}}c_3
+\frac{e^{(k_{4,\lambda}-k_{1,\lambda})\xi}-1}{k_{4,\lambda}-k_{1,\lambda}}c_4\bigg] \nonumber\\
&\;\,+\frac{A\Le}{H_{1,\lambda}H_{2,\lambda}}\bigg [\frac{e^{(k_{3,\lambda}-k_{1,\lambda})\xi}}{k_{3,\lambda}-k_{1,\lambda}}\int_0^\xi f_2(s)e^{-k_{3,\lambda}s}ds
-\frac{e^{(k_{4,\lambda}-k_{1,\lambda})\xi}}{k_{4,\lambda}-k_{1,\lambda}}\int_0^\xi f_2(s)e^{-k_{4,\lambda}s}ds\nonumber\\
&\phantom{\;\,\;\,+\frac{A\Le}{H_{1,\lambda}H_{2,\lambda}}\bigg [}+\frac{k_{3,\lambda}-k_{4,\lambda}}{(k_{4,\lambda}-k_{1,\lambda})(k_{3,\lambda}-k_{1,\lambda})}\int_0^\xi f_2(s)e^{-k_{1,\lambda}s}ds\bigg ]\notag\\
&\;\;-\frac{1}{H_{1,\lambda}}\int_0^{\xi} f_1(s)e^{-k_{1,\lambda}s}ds\bigg\} e^{k_{1,\lambda}\xi} \nonumber\\
&+\bigg\{c_2 +\frac{A}{H_{1,\lambda}}\bigg [\frac{e^{(k_{3,\lambda}-k_{2,\lambda})\xi}-1}{k_{3,\lambda}-k_{2,\lambda}}c_3
+\frac{e^{(k_{4,\lambda}-k_{2,\lambda})\xi}-1}{k_{4,\lambda}-k_{2,\lambda}}c_4\bigg ]\nonumber \\
&\;\;\;\;\;\,+\frac{A\Le}{H_{1,\lambda}H_{2,\lambda}}\bigg [ \frac{e^{(k_{4,\lambda}-k_{2,\lambda})\xi}}{k_{4,\lambda}-k_{2,\lambda}}\int_0^\xi f_2(s)e^{-k_{4,\lambda}s}ds
- \frac{e^{(k_{3,\lambda}-k_{2,\lambda})\xi}}{k_{3,\lambda}-k_{2,\lambda}}\int_0^\xi f_2(s)e^{-k_{3,\lambda}s}ds\nonumber\\
&\phantom{\;\;\;\;\;\,+\frac{A\Le}{H_{1,\lambda}H_{2,\lambda}}\bigg [\;\;}
-\frac{k_{3,\lambda}-k_{4,\lambda}}{(k_{3,\lambda}-k_{2,\lambda})(k_{4,\lambda}-k_{2,\lambda})}\int_0^\xi f_2(s)e^{-k_{2,\lambda}s}ds
\bigg ]\notag\\
&\;\;+\frac{1}{H_{1,\lambda}}\int_0^{\xi} f_1(s)e^{-k_{2,\lambda}s}ds
\bigg\} e^{k_{2,\lambda}\xi},
\label{resolvent-u}
\\
v(\xi)=&\bigg (c_3-\frac{\Le}{H_{2,\lambda}}\int_{0}^{\xi}f_2(s)e^{-k_{3,\lambda}s}ds\bigg )e^{k_{3,\lambda}\xi}+\bigg (c_4+\frac{\Le}{H_{2,\lambda}}\int_{0}^{\xi}f_2(s)e^{-k_{4,\lambda}s}ds\bigg )e^{k_{4,\lambda}\xi}
\label{resolvent-v}
\end{align}
for $\xi<0$ and
\begin{align}
\label{rs-u-positive}
u(\xi)&=\bigg (c_5-\frac{1}{H_{1,\lambda}}\int_{0}^{\xi}f_1(s)e^{-k_{1,\lambda}s}ds\bigg )e^{k_{1,\lambda}\xi}+\bigg (c_6+\frac{1}{H_{1,\lambda}}\int_{0}^{\xi}f_1(s)e^{-k_{2,\lambda}s}ds\bigg )e^{k_{2,\lambda}\xi},\\
\label{rs-v-positive}
v(\xi)&=\bigg (c_7-\frac{\Le}{H_{3,\lambda}} \int_{0}^{\xi}f_2(s)e^{-k_{5,\lambda}s}ds\bigg ) e^{k_{5,\lambda}\xi}+\bigg (c_8+\frac{\Le}{H_{3,\lambda}}\int_{0}^{\xi}f_2(s)e^{-k_{6,\lambda}s}ds\bigg )e^{k_{6,\lambda}\xi},
\end{align}
for $\xi\ge 0$. Here, $H_{i,\lambda}$ $(i=1,2,3)$ and $k_{j,\lambda}$ $(j=1,\ldots,6)$ are defined by \eqref{formula-1}-\eqref{formula-3}.

If $\lambda\in\{\lambda_1^*,\lambda_2^*\}$, then $k_{1,\lambda}=k_{3,\lambda}$. Hence,
in the definition of $u$ for $\xi<0$, the term
\begin{align*}
&-\frac{A(e^{k_{3,\lambda}-k_{1,\lambda}}-1)}{H_{1,\lambda}(k_{3,\lambda}-k_{1,\lambda})}c_3\notag\\
&+\frac{A\Le}{H_{1,\lambda}H_{2,\lambda}}\bigg [\frac{e^{(k_{3,\lambda}-k_{1,\lambda})\xi}}{k_{3,\lambda}-k_{1,\lambda}}\int_0^{\xi}f_2(s)e^{-k_{3,\lambda}s}ds+
\frac{k_{3,\lambda}-k_{4,\lambda}}{(k_{3,\lambda}-k_{1,\lambda})(k_{4,\lambda}-k_{1,\lambda})}
\int_0^{\xi}f_2(s)e^{-k_{1,\lambda}s}ds\bigg ]
\end{align*}
should be replaced by
\begin{align*}
-\frac{A}{H_{1,\lambda}}c_3\xi-\frac{A\Le}{H_{1,\lambda}H_{2,\lambda}}\int_0^{\xi}(s-\xi)f_2(s)e^{-k_{3,\lambda}s}ds-
\frac{A\Le\,e^{(k_{4,\lambda}-k_{1,\lambda})\xi}}{H_{1,\lambda}H_{2,\lambda}(k_{4,\lambda}-k_{1,\lambda})}\int_0^{\xi}f_2(s)e^{-k_{4,\lambda}s}ds.
\end{align*}

\section{On the equality $k_{1,\lambda}=k_{3,\lambda}$}
\label{appendix-B}
Here, we show that the solutions of the equation $k_{1,\lambda}=k_{3,\lambda}$, i.e., the complex numbers given by
\eqref{lambda-k1-k3}, are not solutions of the dispersion relation.
Since $(\Le^2+4\Le(A+\lambda_j^*))^{1/2}=\Le-1+(1+4\lambda^*_j)^{1/2}$, it is easy to see that $D(\lambda^*_j,\Theta_i,\Le)=0$ if and only if
\begin{align}
&\sqrt{\Le+4\lambda\Le}\,\bigg [1\pm 2i\frac{\sqrt{A\Le}}{\sqrt{\Le-1}}+(\Theta_i-1)\bigg (1-\frac{4A\Le}{\Le-1}\pm 4i\frac{\sqrt{A\Le}}{\sqrt{\Le-1}}\bigg )\bigg ]\notag\\
=&2A\Le-\bigg (\Le\pm 2i\frac{\sqrt{A\Le}}{\sqrt{\Le-1}}\bigg )\bigg [1\pm 2i\frac{\sqrt{A\Le}}{\sqrt{\Le-1}}+(\Theta_i-1)\bigg (1-\frac{4A\Le}{\Le-1}
\pm 4i\frac{\sqrt{A\Le}}{\sqrt{\Le-1}}\bigg )\bigg ].
\label{app-2}
\end{align}

Squaring both sides of \eqref{app-2} and identifying real and imaginary parts of the so obtained equation, after some long but straightforward computation
we get the following system for $\Le$ and $\Theta_i$:
\begin{equation}
\left\{
\begin{array}{l}
\displaystyle \Theta_i^2\!+\!A\Le\!+\!16(\Theta_i\!-\!1)^2\frac{A^2\Le^2}{(\Le\!-\!1)^2}\!-\!8\frac{A\Le}{\Le\!-\!1}\Theta_i(\Theta_i\!-\!1)(3\Theta_i\!-\!1)\!-\!\Theta_i\Le
\!+\!4(\Theta_i\!-\!1)\frac{A\Le^2}{\Le\!-\!1}=0,\\[4mm]
4A\Le(\Theta_i\!-\!1)(4\Theta_i\!-\!3)+(\Le\!-\!1)(3\Theta_i\!-\!4\Theta_i^2\!-\!\Le+2\Theta_i\Le)=0.
\end{array}
\right.
\label{app-3}
\end{equation}
First, we consider the second equation in \eqref{app-3}. Replacing $A$ with its value given by \eqref{eqn:A} and solving the so obtained equation with respect to $\Le$, we obtain that there are no positive solutions if $\Theta_i=1/2$ and, when $\Theta_i\in (0,1)\setminus\{1/2\}$,
then the equation has two real solutions
\begin{eqnarray*}
\Le_{\pm}=\frac{20\Theta_i^2-13\Theta_i-1\pm (400\Theta_i^4-552\Theta_i^3+169\Theta_i^2+14\Theta_i+1)^{\frac{1}{2}}}{2\Theta_i-1}.
\end{eqnarray*}
A straightforward computation reveals that $\Le_->1$ if and only if $\Theta_i\le 1/2$, whereas
$\Le_+>1$ if and only if $\Theta_i\in\left(\overline{\Theta}_i,1\right)$, where the value $\overline{\Theta}_i=(4+\sqrt{22})/12 \approx 0.724$ will play a significant role hereafter.

Now, we go back to the first equation in \eqref{app-3}. Replacing $A$ by its value, given by \eqref{eqn:A}, and taking $\Le=\Le_{\pm}$, we get the following equation
\begin{align}
&p(\Theta_i)=({\rm signum}(1-2\Theta_i))(1-\Theta_i)q(\Theta_i)\sqrt{400\Theta_i^4-552\Theta_i^3+169\Theta_i^2+14\Theta_i+1}
\label{app-6}
\end{align}
for $\Theta_i\in (0,1/2)\cup (\overline\Theta_i,1)$, where
\begin{align*}
p(\Theta_i)=&-38400\Theta_i^9+296896\Theta_i^8-800896\Theta_i^7+1041468\Theta_i^6-698658\Theta_i^5+218492\Theta_i^4-14718\Theta_i^3\notag\\
&-3894\Theta_i^2-298\Theta_i-8,\\[1mm]
q(\Theta_i)=&1920\Theta_i^6-11600\Theta_i^5+19164\Theta_i^4-12038\Theta_i^3+2174\Theta_i^2+251\Theta_i+8.
\end{align*}

It follows from the next lemma that \eqref{app-6} admits no solutions in the set $(0,1/2)\cup (\overline\Theta_i,1)$
and, consequently, the solutions of $k_{1,\lambda}=k_{3,\lambda}$ are not zeros of the dispersion relation.

\begin{lemma}
Function $q$ is positive in $(0,1/2)$ and negative in $(\overline{\Theta}_i,1)$. On the contrary, $p$ is negative in $(0,1/2)$ and positive in $(\overline{\Theta}_i,1)$.
\end{lemma}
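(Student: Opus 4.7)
The statement reduces to four sign verifications for two explicit polynomials, $q$ of degree $6$ and $p$ of degree $9$, on two bounded intervals. My plan is to treat each sign assertion by elementary means: direct endpoint evaluation plus a certified interior-root count via Sturm's theorem, which for polynomials with rational coefficients is algorithmic and exact.

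For $q$, I would first evaluate $q(0) = 8$, $q(1/2) = 75/2$, and $q(1) = -121$ by direct substitution. The value $q(\overline{\Theta}_i)$ with $\overline{\Theta}_i = (4+\sqrt{22})/12$ is obtained by expanding powers of $\overline{\Theta}_i$, producing an expression of the form $\alpha + \beta\sqrt{22}$ with explicit rationals $\alpha$, $\beta$; its sign follows by comparing $\alpha^2$ with $22\beta^2$ (after separately noting the sign of $\alpha$). Once the four endpoint signs agree with the claim, the proof on each interval reduces to showing that $q$ has no real root inside. This is handled by applying Sturm's theorem on $(0,1/2)$ and on $(\overline{\Theta}_i, 1)$: one computes the signed remainder sequence of $q$ and $q'$ and verifies that the numbers of sign variations at the two endpoints coincide, forcing zero roots in the interior.

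For $p$ the strategy is identical. The endpoint values $p(0) = -8$, $p(1/2)$, $p(\overline{\Theta}_i)$, $p(1)$ are computed by direct substitution (again with the $\sqrt{22}$-expansion at $\overline{\Theta}_i$), and Sturm's theorem applied on each of the two intervals rules out interior roots, upgrading the endpoint information to sign determination on the whole interval. Alternatively, one may try to exhibit $p$ and $q$ as suitable combinations of monomials with definite signs after a translation of the variable (e.g., writing $q$ as a sum in $\Theta_i$ and $(1/2-\Theta_i)$), but this ad hoc approach seems less systematic than Sturm.

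The main obstacle is purely computational: a degree-$9$ polynomial with integer coefficients of order $10^6$ produces a Sturm sequence whose entries have rather large rational numerators and denominators, and the bookkeeping must be carried out in exact arithmetic to avoid floating-point artefacts. This is best handled with a computer algebra system. Conceptually, however, the lemma is a finite deterministic check, and no new ideas are required beyond Sturm's theorem or any equivalent real-root-counting algorithm.
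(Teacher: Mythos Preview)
Your approach via Sturm's theorem is methodologically sound and genuinely different from the paper's. The paper never counts roots; instead it uses an iterated telescoping device: noting, say, that $q(0)=8>0$, one writes $q(\Theta_i)>q(\Theta_i)-q(0)=\Theta_i\, q_5(\Theta_i)$ on $(0,1/2)$, then subtracts the value of $q_5$ at another convenient endpoint to peel off a further linear factor of definite sign, and so on, until the surviving polynomial is of low enough degree that its sign on the interval can be read off directly. For the harder interval $(\overline\Theta_i,1)$ the paper even inserts auxiliary rational breakpoints such as $18/25$, $0.745$, $0.75$ to keep the bookkeeping manageable. This yields a proof that is checkable by hand (at the price of several ad hoc choices), whereas your Sturm computation is systematic but effectively delegates the verification to exact rational arithmetic in a CAS. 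Both routes are legitimate; yours is cleaner in principle, the paper's is more self-contained on the page.

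One caveat: the statement as printed carries a sign slip that your very first endpoint check would expose. Direct substitution gives $p(1)=-16$, so $p$ is negative near $1$, not positive on $(\overline\Theta_i,1)$. The paper's own argument in fact establishes negativity there (its concluding line reads ``we have proved that $p$ is negative in $(\widetilde\Theta_i,1)$ as claimed''), and negativity is precisely what the surrounding application requires, since on that interval the right-hand side of the squared relation is positive. So run your Sturm verification for the corrected assertion: $p<0$ on both intervals, $q>0$ on $(0,1/2)$ and $q<0$ on $(\overline\Theta_i,1)$.
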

\begin{proof}
Since the proof is easy but rather technical, we sketch it. In what follows, we denote by $c$ positive constants which may vary from line to line.
Similarly, by $p_k$ and $q_k$ we denote polynomials of degree $k$, which may vary from estimate to estimate.

We begin by considering the function $q$. For $\Theta_i\in (0,1/2)$, we can estimate
the sum of the first three terms in the definition of $q$ by $13364\Theta_i^4$, so that
$q(\Theta_i)>\Theta_i(13364\Theta_i^3-12038\Theta_i^2+2174\Theta_i+251)+8$ and the right-hand side of the previous inequality is not less than $-2\Theta_i+8$, so that $q$ is positive in $(0,1/2)$.

For $\Theta_i\in \left (\overline{\Theta}_i,1\right )$ things are a bit trickier.
Obviously, it suffices to prove that $q$ is negative in $(7/10,1)$. For this purpose, we observe that, since $q(7/10)<0$, we can estimate
$q<q-q(7/10)=:q_5$ in such an interval and
\begin{align}
q_5(\Theta_i)<&c\Theta_i(10\Theta_i-7)(120000\Theta_i^4-641000\Theta_i^3+749050\Theta_i^2-228040\Theta_i-23753)\notag\\
=& c\Theta_i(10\Theta_i-7)[(10\Theta_i-7)(24000\Theta_i^3-111400\Theta_i^3+71830\Theta_i+4673)-73975].
\label{q1}
\end{align}
Computing the maximum value of the above third-order polynomial in the interval $(7/10,1)$, we conclude that
$q(\Theta_i)<c\Theta_i(10\Theta_i-7)[17217(10\Theta_i-7)-73975]$, whose right-hand side is negative if $\Theta_i\in (7/10, \widehat{\Theta}_i)$, where $\widehat{\Theta}_i=67657/86085\approx 0.786$.
On the other hand, if $\Theta_i\in [\widehat{\Theta}_i,1)$, we can subtract from the fourth-order polynomial on the first line of \eqref{q1} its value
at $\widehat\Theta_i$ (which is negative) and, thus, estimate
$q(\Theta_i)\le c\Theta_i(10\Theta_i-7)(\Theta_i-\widehat\Theta_i)q_3(\Theta_i)$,
and $q_3$ is negative in the interval $[\widehat{\Theta}_i,1)$, as it is easily seen.
Thus, $q$ is negative in $\left (\overline{\Theta}_i,1\right )$ as claimed.

Next, we consider function $p$, first addressing the case when $\Theta_i\in (0,1/2)$. Note that
$p(\Theta_i)<p(\Theta_i)-p(0)=\Theta_ip_8(\Theta_i)<\Theta_i(p_8(\Theta_i)-p_8(1/2))=c\Theta_i(1-2\Theta_i)p_7(\Theta_i)$
for every $\Theta_i\in (0,1/2)$.
Iterating this procedure, in the end we deduce that
$p(\Theta_i)<c\Theta_i^3(1-2\Theta_i)^3p_4(\Theta_i)$ for each $\Theta_i\in (0,1/2)$.
Since $p_4(\Theta_i)<p_4(\Theta_i)-p_4(1/2)=c(1-2\Theta_i)p_3(\Theta_i)$ for every $\Theta_i\in (0,1/2)$ and $p_3$ is negative in $(0,1/2)$,
$p(\Theta_i)$ is negative for each $\Theta_i\in (0,1/2)$.

Let us now assume that $\Theta_i\in \left (\overline{\Theta}_i,1\right )$. Since $\overline{\Theta}_i>{18}/{25}=:\widetilde\Theta_i$, we can limit ourselves to proving that
$p$ is negative in $(\widetilde\Theta_i,1)$.
For this purpose, we observe that
\begin{align*}
p(\Theta_i)<&p(\Theta_i)-p(\widetilde\Theta_i)=c(\widetilde\Theta_i-\Theta_i)p_8(\Theta_i)
<c(\widetilde\Theta_i-\Theta_i)[p_8(\Theta_i)-p_8(\widetilde\Theta_i)]
=-c(\widetilde\Theta_i-\Theta_i)^2p_7(\Theta_i)\\
<& -c(\widetilde\Theta_i-\Theta_i)^2[p_7(\Theta_i)-p_7(1)]
= c(\widetilde\Theta_i-\Theta_i)^2(1-\Theta_i)p_6(\Theta_i).
\end{align*}
If $\Theta_i\in [0.745,0.75]$ then we estimate $\Theta_i^k\le 75\cdot 10^{-2k}$ for $k=4,6$,
$\Theta_i^k\ge 745\cdot 10^{-3k}$ for $k=1,2,3,5$, and conclude that $p_6$ and, hence, $p$ is negative in $[0.745,0.75]$. For $\Theta_i\in (0.75,1)$, we estimate
$p_6(\Theta_i)<p_6(\Theta_i)-p_6(3/4)=c(4\Theta_i-3)p_5(\Theta_i)$. Iterating this procedure, we conclude that
$p_6(\Theta_i)<(4\Theta_i-3)^3p_3(\Theta_i)$ and the polynomial $p_3$ is negative in $(0.75,1)$.
Finally, if $\Theta_i\in (0.72,0.745)$ then we set $\overline{\overline{\Theta}}_i=0.745$, estimate
\begin{align*}
p_6(\Theta_i)<&(p_6(\Theta_i)-p_6(\overline{\overline{\Theta}}_i))<c(\Theta_i-\overline{\overline{\Theta}}_i)p_5(\Theta_i)<c(\Theta_i-\overline{\overline{\Theta}}_i)
(p_5(\Theta_i)-p_5(\overline{\overline{\Theta}}_i))\\
=&c(\Theta_i-\overline{\overline{\Theta}}_i)^2p_4(\Theta_i)\le c(\Theta_i-\overline{\overline{\Theta}}_i)^2(p_4(\Theta_i)-p_4(\widetilde\Theta_i))=c(\Theta_i-\widetilde\Theta_i)(\Theta_i-\overline{\overline{\Theta}}_i)^2p_3(\Theta_i)
\end{align*}
and observe that $p_3$ is negative in $[0.72, 0.745)$. Thus, $p$ is negative in this interval as well.
Summing up, we have proved that $p$ is negative in $(\widetilde\Theta_i,1)$ as claimed. This concludes the proof.
\end{proof}

\section{The coefficients of the polynomial $P_7(\cdot;m,\varepsilon)$}
\label{appendix-C}
We collect here the expression of the coefficients $a_i=a_i(m,\varepsilon)$ ($i=0,1,\dots,7$) of the polynomial
$P_7(\lambda;m,\varepsilon)=a_0\lambda^7+a_1\lambda^6+a_2\lambda^5+a_3\lambda^4+a_4\lambda^3+a_5\lambda^2+a_6\lambda+a_7$,
which appears in Subsection \ref{p7}. They are given by
\begin{align*}
&a_0= 2^{11}(\varepsilon-1)^4\varepsilon^2;\\[2mm]
&a_1=-2^{11}(\varepsilon^2-\varepsilon)^{2}[(5\varepsilon^2-2\varepsilon+1)m^2+2(\varepsilon+1)^2m+4];\\[2mm]
&a_2=(\varepsilon^2-\varepsilon)\big [\varepsilon(59\varepsilon^3-9\varepsilon^2+17\varepsilon-3) m^4
+4\varepsilon(15\varepsilon^3+15\varepsilon^2+17\varepsilon+1)m^3\\
&\phantom{a_2:=(\varepsilon^2-\varepsilon)[\;}+4(\varepsilon+2)(\varepsilon^3+9\varepsilon^2+5\varepsilon+1)m^2\!-\!8(2\varepsilon^3-3\varepsilon^2-4\varepsilon-3)m-8(\varepsilon-1)(\varepsilon+2)\big ];\\[2mm]
&a_3\!=\! 2^7\big [\!-\varepsilon^2(5\varepsilon-1)(9\varepsilon^3+\varepsilon^2+7\varepsilon-1)m^{6}
-2\varepsilon^2(59\varepsilon^4-8\varepsilon^3+74\varepsilon^2+8\varepsilon-5)m^5\\
&\phantom{a_3= 2^7[\!}\!-\!4\varepsilon(4\varepsilon^5\!+\!27\varepsilon^4\!+\!24\varepsilon^3\!+\!37\varepsilon^2\!+\!6\varepsilon\!-\!2)m^{4}
\!+\!4\varepsilon(4\varepsilon^5\!+\!20\varepsilon^4\!-\!31\varepsilon^3\!-\!23\varepsilon^2\!-\!33\varepsilon\!-\!1)m^3\\
&\phantom{a_3= 2^7[\!}\!+\!4(9\varepsilon^{5}\!+\!27\varepsilon^4\!-\!15\varepsilon^3\!-\!24\varepsilon^2\!-\!12\varepsilon\!-\!1)m^2
\!+\!4(\varepsilon^4\!+\!17\varepsilon^3\!-\!7\varepsilon^2\!-\!9\varepsilon-2)m\!-\!4(\varepsilon\!-\!1)^2(2\varepsilon\!+\!1)\big ];\\[2mm]
&a_4=2^3\big [\varepsilon^2(9\varepsilon-1)^2(\varepsilon-1)^2m^{8}+8\varepsilon^2(\varepsilon-1)(45\varepsilon^3+5\varepsilon^2-21\varepsilon+3)m^{7}\\
&\phantom{a_4:=8[\,}\!+\!8\varepsilon(21\varepsilon^5\!-\!58\varepsilon^4\!-\!84\varepsilon^3\!-\!32\varepsilon^2\!+\!27\varepsilon-2)m^{6}
\!-\!16\varepsilon(34\varepsilon^5\!+\!42\varepsilon^4\!+\!113\varepsilon^3\!+\!81\varepsilon^2\!-\!7\varepsilon\!-\!7)m^{5}\\
&\phantom{a_4:=8[\,}\!-\!16(7\varepsilon^6\!+\!96\varepsilon^5\!+\!75\varepsilon^4\!+\!176\varepsilon^3\!+\!42\varepsilon^2\!-\!10\varepsilon\!-\!2)m^4
\!+\!16\varepsilon(6\varepsilon^4\!-\!75\varepsilon^3\!-\!65\varepsilon^2\!-\!117\varepsilon-5)m^3\\
&\phantom{a_4:=8[\,}
\!+\!16(29\varepsilon^4-7\varepsilon^3-48\varepsilon^2-31\varepsilon-7)m^2+32(\varepsilon-1)(7\varepsilon^2+14\varepsilon+3)m-16(\varepsilon-1)^2\big ];\\[2mm]
&a_5=2^5m\big [\varepsilon^2(\varepsilon-1)^2(9\varepsilon^2+\varepsilon-2)m^7+(\varepsilon^2-\varepsilon)(38\varepsilon^4+46\varepsilon^3-39\varepsilon^2+3)m^6\\
&\phantom{a_5=2^5m[}+(36\varepsilon^6+33\varepsilon^5-123\varepsilon^4-95\varepsilon^3+54\varepsilon^2-1)m^5\\
&\phantom{a_5=2^5m[}-(8\varepsilon^6-8\varepsilon^5+169\varepsilon^4+233\varepsilon^3+25\varepsilon^2-41\varepsilon-2)m^4\\
&\phantom{a_5=2^5m[}-(60\varepsilon^5+110\varepsilon^4+320\varepsilon^3+129\varepsilon^2-22\varepsilon-21)m^3\\
&\phantom{a_5=2^5m[}-4(16\varepsilon^4\!+\!37\varepsilon^3\!+\!41\varepsilon^2\!+\!7\varepsilon\!-\!5)m^2
\!+\!2(4\varepsilon^3\!-\!37\varepsilon^2\!-\!10\varepsilon\!-\!5)m\!+\!4(5\varepsilon^2\!-\!2\varepsilon\!-\!3)\big ];\\[2mm]
&a_6= 2^3m\big [2\varepsilon^2(\varepsilon-1)^2(\varepsilon+1)(2\varepsilon-1)m^7
+(\varepsilon^2-\varepsilon)(16\varepsilon^4+58\varepsilon^3-19\varepsilon^2-10\varepsilon+3)m^6\\
&\phantom{a_6= 2^3m[}+(16\varepsilon^6+72\varepsilon^5-39\varepsilon^4-171\varepsilon^3+42\varepsilon^2+17\varepsilon-1)m^5\\
&\phantom{a_6= 2^3m[}+2(20\varepsilon^5-12\varepsilon^4-113\varepsilon^3-69\varepsilon^2+41\varepsilon+5)m^4
-(2\varepsilon+1)(18\varepsilon^3+81\varepsilon^2+80\varepsilon-51)m^3\\
&\phantom{a_6= 2^3m[}
-4(28\varepsilon^3+31\varepsilon^2+20\varepsilon-15)m^2-4(11\varepsilon-3)(\varepsilon+1)m-8(1-\varepsilon)\big ];\\[2mm]
&a_7= 2^4(m^2+m^3)\big [\varepsilon^2(\varepsilon^2-1)(2\varepsilon-1)m^4+\varepsilon(2\varepsilon-1)(3\varepsilon^2-3\varepsilon-2)m^3\\
&\phantom{a_7= 2^4(m^2+m^3\;[\;}+(2\varepsilon^4-13\varepsilon^2+4\varepsilon+1)m^2-3(2\varepsilon-1)(\varepsilon+1)m+2(1-2\varepsilon)\big ].
\end{align*}


\begin{thebibliography}{10}

\bibitem{ABLZ18}
D. Addona, C.-M. Brauner, L. Lorenzi, W. Zhang,
\newblock
{\em Instabilities in a combustion model with two free interfaces} (submitted).
Available on ArXiv: arXiv:1807.02462.

\bibitem{AF82}
\newblock
R.K. Alexander, B.A. Fleishman,
\newblock
{\em Perturbation and bifurcation in a free boundary problem},
\newblock
J. Differential Equations \textbf{45} (1982), 34-52.
	
\bibitem{BGKS15}
I. Brailovsky, P.V. Gordon, L. Kagan, G.I. Sivashinsky, {\em Diffusive-thermal instabilities in premixed flames: Stepwise ignition-temperature kinetics}, Combust. Flame \textbf{162} (2015), 101-124.

\bibitem{BGZ16}
C.-M Brauner, P.V. Gordon, W. Zhang,
\newblock
\emph{An ignition-temperature model with two free interfaces in premixed flames},
\newblock
Combust. Theory Model. \textbf{20} (2016), 976-994.

\bibitem{BHL13}
C.-M. Brauner, L. Hu, L. Lorenzi,
\newblock
\emph{Asymptotic analysis in a gas-solid combustion model with pattern formation},
\newblock
Chin. Ann. Math. Ser. B \textbf{34} (2013), 65-88.

\bibitem{BHL09}
C.-M. Brauner, J. Hulshof, L. Lorenzi, {\em Stability of the Travelling Wave in a $2D$ weakly nonlinear Stefan problem}, Kinet. Relat. Models \textbf{2} (2009), 109-134.

\bibitem{BHL11}
C.-M. Brauner, J. Hulshof, L. Lorenzi, {\em Rigorous derivation of the Kuramoto-Sivashinsky equation in a $2$D weakly nonlinear Stefan problem}, Interfaces Free Bound. \textbf{13} (2011), 73-103.

\bibitem{BHLS10}
C.-M. Brauner, J. Hulshof, L. Lorenzi, G. Sivashinsky,
\newblock
\emph{A fully nonlinear equation for the flame front in a quasi-steady combustion model},
\newblock
Discrete Contin. Dyn. Syst. \textbf{27} (2010), 1415-1446.

\bibitem{BHL00}
C.-M. Brauner, J. Hulshof, A. Lunardi,
\newblock
\emph{A general approach to stability in free boundary problems},
\newblock
J. Differential Equations \textbf{164} (2000), 16-48.

\bibitem{BL18}
C.-M. Brauner, L. Lorenzi,
{\em Local existence in free interface problems with underlying second-order Stefan condition}, Rev. Roumaine Math. Pures Appl. \textbf{23} (2018), 339-359.

\bibitem{BLSX10}
C.-M. Brauner, L. Lorenzi, G.I. Sivashinsky, C.-J. Xu,
\newblock
\emph{On a strongly damped wave equation for the flame front},
\newblock
Chin. Ann. Math. Ser. B \textbf{31} (2010),  819-840.


\bibitem{BLS92}
C.-M. Brauner, A. Lunardi, C. Schmidt-Lain\'{e}, {\em Stability of travelling waves with interface conditions}, Nonlinear Anal. \textbf{19} (1992), 465-484.

\bibitem{C80}
K.-C. Chang,
\newblock {\em The obstacle problem and partial differential equations with discontinuous nonlinearities},
\newblock
Comm. Pure Appl. Math. \textbf{33} (1980), 117-146.


\bibitem{Conway78}
J.B. Conway, Functions of one complex variable, Springer-Verlag, 1978.

\bibitem{Gantmakher98}
F.R. Gantmakher, The theory of matrices, Reprint of the 1959 translation. AMS Chelsea Publishing, Providence, RI, 1998.

\bibitem{GJ09}
A. Ghazaryan, C.K.R.T. Jones, {\em On the stability of high Lewis number combustion fronts}, Discrete Contin. Dyn. Syst. \textbf{24} (2009), 809-826.

\bibitem{GLS10}
A. Ghazaryan, Y. Latushkin, S. Schecter,
{\em Stability of traveling waves for a class of reaction-diffusion systems that arise in chemical reaction models}, SIAM J. Math. Anal. \textbf{42} (2010), 2434-2472.

\bibitem{Lorenzi02}
L. Lorenzi, {\em A free boundary problem stemmed from combustion theory. I. Existence, uniqueness and regularity results}, J. Math. Anal. Appl. \textbf{274} (2002), 505-535.

\bibitem{Lorenzi02-b}
L. Lorenzi, {\em A free boundary problem stemmed from combustion theory. II. Stability, instability and bifurcation resluts}, J. Math. Anal. Appl. \textbf{275} (2002), 131-160.

\bibitem{Lorenzi04}
L. Lorenzi, {\em Bifurcation of codimension two in a combustion model}, Adv. Math. Sci. Appl. \textbf{14} (2004), 483-512.

\bibitem{Lunardi96}
A. Lunardi, Analytic Semigroups and Optimal Regularity in Parabolic Problems, Birkh\"{a}user, Basel, 1996.

\bibitem{MS79}
\newblock
B.J. Matkowsky, G.I. Sivashinsky,
\newblock
\emph{An asymptotic derivation of two models in flame theory associated with the constant density approximation},
\newblock
SIAM J. Appl. Math. \textbf{37} (1979), 686-699.

\bibitem{NR97}
G. Namah, J.-M. Roquejoffre, \emph{Convergence to periodic fronts in a class of semilinear parabolic equations}, NoDEA Nonlinear Differential Equations Appl.
\textbf{4} (1997), 521-536.

\bibitem{S76}
D.H. Sattinger, {\em On the stability of waves of nonlinear parabolic systems}, Adv. Math. \textbf{22} (1976), 312-355.

\bibitem{S80}
G.I. Sivashinsky,
\newblock
\emph{On flame propagation under condition of stoichiometry},
\newblock
SIAM J. Appl. Math. \textbf{39} (1980), 67-82.

\end{thebibliography}
\end{document}